\definecolor{forestgreen}{rgb}{0.13, 0.55, 0.13}
\definecolor{anna}{rgb}{0.01, 0.28, 1.0}
\newtheorem{theorem}{\bf Theorem}[section]
\newtheorem{lemma}[theorem]{\bf Lemma}
\newtheorem{definition}[theorem]{\bf Definition}
\newtheorem{remark}[theorem]{\bf Remark}
\def \GK {{\Gamma_{\!\! K}}}
\def \GL {{\Gamma_{\!\! L}}}
\newcommand{\R}{\mathbb{R}}
\newcommand{\N}{\mathbb{N}}
\newcommand{\OO}{\mathbb{O}}
\newcommand{\I}{\mathbb{I}}
\newcommand{\W}{\mathcal{W}}
\def \rnn {{\mathbb {R}}^{2n+1}}
\def \L {\mathscr{L}}
\def \K {\mathscr{K}}
\def \a {{\alpha}}
\def \b {{\beta}}
\def \g {{\gamma}}
\def \d {{\delta}}
\def \e {{\varepsilon}}
\def \epsilon {{\varepsilon}}
\def \k {{\kappa}}
\def \l {{\lambda}}
\def \r {{\rho}}
\def \s {{\sigma}}
\def \t {{\tau}}
\def \m {{\mu}}
\def \x {{\xi}}
\def \z {{\zeta}}
\def \phi {{\varphi}}
\def \G {{\Gamma}}
\def \O {{\Omega}}
\def \div {{\text{\rm div}}}
\def \loc {{\text{\rm loc}}}
\def \trace {{\text{\rm tr}}}
\def \tr {{\text{\rm Tr}}}
\def \diag {{\text{\rm diag}}}
\def \meas {{\text{\rm meas}}}
\def\p{\partial}
\def \tilde {\widetilde}
\def \Q {{\mathcal{Q}}}
\def \D {{\mathcal{D}}}
\begin{document}
	\title[On the fundamental solution]{On the fundamental solution for degenerate 
	Kolmogorov equations with rough coefficients}
	
	\author{Francesca Anceschi}
	\address{Dipartimento di Matematica e Applicazioni
		"Renato Caccioppoli" -
		Università degli Studi di Napoli "Federico II": 
		Via Cintia, Monte S. Angelo
		I-80126 Napoli, Italy}
	\email{francesca.anceschi@unina.it}
	
	%\author{Sergio Polidoro}
	%\address{Dipartimento di Scienze Fisiche, Informatiche e Matematiche, Universit\'a degli Studi di Modena e 
	%Reggio Emilia, Via G. Campi 213/b, I-41125 Modena, Italy}
	%\email{sergio.polidoro@unimore.it}
	
	\author{Annalaura Rebucci}
	\address{Dipartimento di Scienze Matematiche, Fisiche e Informatiche - 
	Università degli Studi di Parma: Parco Area delle Scienze, 7/A 43124 Parma, Italy}
	\email{annalaura.rebucci@unipr.it}
	
	\date{\today}

	\begin{abstract}
		\noindent
		The aim of this work is to prove the existence of a fundamental solution associated to the 
		Kolmogorov equation $\L u = f$ with measurable coefficients in the dilation invariant case. Moreover,
		we prove Gaussian upper and lower bounds for it, and other related properties. 
				
		\medskip 
		\noindent
		{\bf Key words: 
		Kolmogorov equation, weak regularity theory, ultraparabolic, fundamental solution,
		potential theory}	
		
		\medskip
		\noindent	
		{\bf AMS subject classifications: 35K70, 35E05, 35D30,
			35Q84, 35H20, 35B65, 35R30, 35B45}
	\end{abstract}
	
	\maketitle
	
	\hypersetup{bookmarksdepth=2}
	\setcounter{tocdepth}{1}
	
	\tableofcontents
	
	\setcounter{equation}{0}\setcounter{theorem}{0}
	\section{Introduction}
	The aim of this work is to prove the existence of a fundamental solution for a 
	second order partial differential equation of Kolmogorov type with measurable coefficients of the form
	\begin{align}\label{defL}
		\L u (x,t) &:=\sum \limits_{i,j=1}^{m_0}\partial_{x_i}\left(a_{ij}(x,t)\partial_{x_j}u(x,t)\right) + 
		\sum \limits_{i=1}^{m_0} b_{i}(x,t) \partial_{x_i} u(x,t) + \\ \nonumber
		&+\sum \limits_{i,j=1}^N b_{ij}x_j\partial_{x_i}u(x,t)-\partial_t u(x,t)
		+c(x,t)u(x,t)=0,
	\end{align}
	where $z=(x,t)=(x_1,\ldots,x_N,t)\in \R^{N+1}$ and $1 \leq m_0 \leq N$. 		
	In particular, the matrices $A_0=(a_{ij}
	(x,t))_{i,j=1,\ldots,m_0}$ and $B=(b_{ij})_{i,j=1,\ldots,N}$ satisfy the following structural assumptions.
\medskip

\begin{itemize}
\item[\textbf{(H1)}] The matrix $A_0 $ is symmetric with real measurable entries, i.e. $a_{ij}(x,t)=a_{ji}(x,t)$, for every $i,j=1,\ldots,m_0$.
Moreover, there exist two positive constants $\lambda$ and $\Lambda$ such that 
\begin{eqnarray}\label{hypcost}
\lambda |\xi|^2 \leq \sum_{i,j=1}^{m_0}a_{ij}(x,t)\xi_i\xi_j \leq \Lambda|\xi|^2
\end{eqnarray}
for every $(x,t) \in \R^{N+1}$ and $\xi \in \R^{m_0}$. The matrix B has constant entries.
\end{itemize}

\medskip

\begin{itemize}
\item[\textbf{(H2)}] The \textit{principal part operator $\K$ of $\L$} is hypoelliptic, where $\K$ is defined as
\begin{eqnarray}\label{defK}
	\K u(x,t):=\sum \limits_{i=1}^{m_0}\partial^2_{x_i} u(x,t) + 
	\sum \limits_{i,j=1}^N b_{ij}x_j\partial_{x_i}u(x,t)-\partial_t u(x,t),
\end{eqnarray}
and it is dilation invariant with respect to the family of dilations $( \d_{r} )_{r>0}$ defined in \eqref{gdil}.
\end{itemize}

Note that we allow the operator $\L$ to be strongly degenerate whenever $m_0 < N$. However, it is known that the 
first order part of $\L$ may induce a strong regularizing property. Indeed, under suitable assumptions on the 
matrix $B$, the operator $\L$ is hypoelliptic, namely every distributional solution $u$ to $\L u = f$ defined in 
some open set $\Omega \subset \R^{N+1}$ belongs to $C^\infty(\Omega)$ and it is a classical solution to $\L u = f$, 
whenever $f \in C^\infty(\Omega)$. We refer to Section \ref{preliminaries} for additional information on this fact.
Lastly, when $\L$ is uniformly parabolic (i.e. $m_0=N$ and $B\equiv \mathbb{O}$), the assumption $\textbf{(H2)}$ is trivially satisfied. 
Indeed, in this case the heat operator is the principal part operator $\K$. 

\medskip

In order to  expose our main results we first need to introduce some preliminary notation. 
From now on, we consider the strip $S_{T_0T_1} := (T_0, T_1) \times \R^{N}$, and in accordance with the scaling of the differential equation (see \eqref{gdil} below) 
we split the coordinate $x\in\R^N$ as
\begin{equation}\label{split.coord.RN}
	x=\big(x^{(0)},x^{(1)},\ldots, x^{(\kappa)}\big), \qquad x^{(0)}\!\in\R^{m_0}, \quad x^{(j)}\!\in\R^{m_j}, \quad j\in\{1,\ldots,\kappa\},
\end{equation}
where every $m_{j}$ is a positive integer such that
\begin{equation*}
		\sum \limits_{j=1}^{\kappa} m_j = N 
		\qquad \text{and} \qquad N \ge m_0 \ge m_1
		\ge \ldots \ge m_\k \ge 1  .
\end{equation*}
Thus, here and in the sequel we denote 
\begin{eqnarray*}
	D=(\partial_{x_1},\ldots,\partial_{x_N}),\quad D_{m_0}=(\partial_{x_1},\ldots,\partial_{x_{m_0}}), \quad \langle\cdot,\cdot 
	\rangle, \quad \div,
\end{eqnarray*}
the gradient, the partial gradient in the first $m_0$ components, the inner product and the divergence in $R^N$, respectively. 
Moreover, we introduce the matrix 
\begin{eqnarray*}
A(x,t)=\left(a_{ij}(x,t)\right)_{1\leq i,j \leq N},
\end{eqnarray*}
where $a_{ij}$, for every $i,j=1,\ldots,m_0$, are the coefficients appearing in \eqref{defL}, while $a_{ij}\equiv 0$ whenever $i > m_0$ or $j>m_0$, and we let
\begin{align}  \label{drift}
Y:=\sum_{i,j=1}^N b_{ij}x_j\partial_{x_i}u(x,t)-\partial_t u(x,t) \quad \text{and} \quad
 b := (b_{1}, \ldots, b_{m_{0}}, 0, \ldots, 0).
\end{align}
Now, we are in a position to rewrite the operator $\L$ in the following compact form
\begin{eqnarray*}
\L u=\div(A D u) + Yu + \langle b, Du \rangle +  cu
\end{eqnarray*}
and we recall that its formal adjoint is defined as  
\begin{align} \label{Ladj}
	\L ^{*} v (\x, \t)= \sum_{i,j=1}^{m_0} \partial_{\xi_i} & \left( a_{ij} (\xi,\tau) \partial_{\xi_j} v (\x, \t) \right)
	 - \sum_{i=1}^{m_0} %\left( a_i (\xi,\tau) \partial_{\xi_i}v (\x, \t) +
	 \partial_{\xi_i}(b_i(\xi,\tau) v(\x, \t)) \\ \nonumber
	 &+(c-\tr (B))v (\x, \t) +Y^{*}v(\xi,\tau)
	%\div(A D v) - Y^{*} v - \langle b, D v \rangle + \left(c - \tr (B) \right) v,
\end{align} 
where 
\begin{equation*}
Y^{*}v(\xi,\tau) := - \sum \limits_{i,j=1}^N b_{ij}\xi_j\partial_{\xi_i}v (\x, \t) + \p_{\tau} v (\x, \t).
\end{equation*}

We denote by $\D(S_{T_0T_1})$ the set of $C^\infty$ functions compactly supported in $S_{T_0T_1}$.
%and by $\D'(S_{T_0T_1})$ the set of distributions in $S_{T_0T_1}$. 
From now on, $H^{1}_{x^{(0)}}$ denotes the Sobolev space of functions $u \in  L^{2} (\O_{m_{0}})$ with 
distributional gradient $D_{m_0}u$ lying in $( L^{2} (\O_{m_{0}}) )^{m_{0}}$, i.e. 
\begin{equation*}
	H^{1}_{x^{(0)}} := \left\{ u \in L^{2} (\O_{m_{0}}) : \, D_{m_{0}} u \in ( L^{2} (\O_{m_{0}}) )^{m_{0}}
	\right\},
\end{equation*}
and we set 
\begin{equation*}
	\| u \|_{H^{1}_{x^{(0)}}} := \| u \|_{L^{2} (\O_{m_{0}}) } + \| D_{m_{0} } u \|_{L^{2} (\O_{m_{0}})}.
\end{equation*}
In a standard manner, see for instance \cite{Brezis}, $\W$ denotes the closure of $C^{\infty} (\overline S_{T_0T_1})$ 
in the norm 
\begin{equation}
	\label{normW}
	\| u \|^2_{\W} = \| u \|^2_{L^2(\O_{N-m_{0} + 1};H^1_{x^{(0)}})} + \| Y u \|^2_{L^2(\O_{N-m_{0} + 1};
	H^{-1}_{x^{(0)}})},
\end{equation}
that is explicitly computed as follows:
\begin{align*}
	\| u \|^2_{\W} = \int_{\O_{N-m_{0} + 1}} \| u(\cdot, y,t) \|_{H^1_{x^{(0)}}}^{2} dy \, dt  + 
				 \int_{\O_{N-m_{0} + 1}}  \| Y u (\cdot, y,t) \|^2_{H^{-1}_{x^{(0)}}} dy \, dt ,
\end{align*}
where $y = (x^{(1)},\ldots, x^{(\kappa)})$. In particular, $\W$ is a Banach space,
and it was firstly introduced in \cite{AR-harnack} as an extension of the natural functional setting that arises in the study of the weak regularity theory for the 
kinetic Kolmogorov-Fokker-Planck equation \cite{AM, GIMV, GI, GM}.
From now on, we consider the shorthand notation $L^{2}H^{-1}$ to denote $L^{2}(\O_{N-m_{0}+1}; H^{-1}_{c,x^{(0)}})$.
Now, we introduce the definition of weak solution we consider in our work. 
\begin{definition}\label{weak-sol2}
A function $u \in \W$ is a weak solution to \eqref{defL} if for every non-negative test function $\phi \in \D(S_{T_0T_1})$, we have
\begin{align}\label{kolmo}
	   \int_{S_{T_0T_1}} - \langle A Du, D\phi \rangle + \phi Y u + \langle b , Du \rangle \phi + c u \phi= 0.
	\end{align}
In the sequel, we will also consider weak sub-solutions to \eqref{defL}, namely functions $u \in \W$ that satisfy the following inequality
\begin{align}\label{kolmo-sub}
	   \int_{S_{T_0T_1}} - \langle A Du, D\phi \rangle + \phi Y u + \langle b , Du \rangle \phi + c u \phi \geq 0,
	\end{align}
	for every non-negative test function $\phi \in \D(S_{T_0T_1})$. A function $u$ is a super-solution to \eqref{defL} if $-u$ 
	is a sub-solution.
\end{definition}
Finally, we recall the definition of weak fundamental solution for the operator $\L$, firstly introduced by Lanconelli, 
Pascucci and Polidoro in \cite[Definition 2.2]{LPP}.
\begin{definition}
	\label{funsolk}
	A  {\rm weak} fundamental solution for $\L$ is a continuous and positive function 
	$\GL = \GL(x,t; x_0, t_0)$ defined for $t \in \R$, $0 \le T_0 < t_0 < t < T_1$ and any $x, x_0 \in \R^{N}$ such that:
	\begin{enumerate}
		\item $\GL = \GL(\cdot, \cdot; x_0,t_0)$ is a weak solution to 
			$\L u = 0$ in $S_{T_0T_1}$ and $\GL = \GL(x,t; \cdot, \cdot)$ is a weak solution of $\L^* u = 0$ 
			in $S_{T_0T_1}$;
		\item for any bounded function $\phi \in C (\R^{N})$ and any $x,x_0 \in \R^{N}$ we have
			\begin{align} \label{uv-def}
				&\begin{cases}
					\L u (x,t) = 0 \qquad &(x,t) \in (T_0, T_1) \times \R^N, \\
					\lim \limits_{(x,t) \to (x_0, t_0) \atop t >t_0} u(x,t) = \phi(x_0) \qquad &x_0 \in \R^N,
				\end{cases}
				\\ \nonumber
				&\begin{cases} 
					\L^* v (x_0,t_0) = 0 \qquad &(x_0,t_0) \in (T_0, T_1) \times \R^N, \\
					 \lim \limits_{(x_0,t_0) \to (x,t) \atop t< t_0} v(x_0,t_0) = \phi(x)  \qquad &x \in \R^N ,  
				\end{cases}
			\end{align}
			where
			\begin{align} \label{rep1}
				u(x,t) := \int_{\R^{N}} \GL(x,t;x_0,t_0) \, \phi(x_0) \, dx_0 , \qquad
				v(x_0,t_0) := \int_{\R^{N}}\GL(x,t;x_0,t_0) \, \phi(x) \, dx .
			\end{align}
	\end{enumerate}
\end{definition}

\begin{comment}
\label{r-cp}
The functions defined in \eqref{rep1} are weak solutions to the following forward and backward Cauchy problems:
	\begin{align} \label{weak-cauchy}
		\begin{cases}
			\L u (x,t) = 0 \,\, &(x,t) \in \R^N \times (t_0, T_1), \\
			u(x,t_0) = \phi (x)  &x \in \R^N,
		\end{cases}
		\quad
		\begin{cases}
			\L^* v (y,t_0) = 0 \,\, &(y,t_0) \in \R^N \times (T_0, t_0), \\
			v(x,T_1) = \phi (x)  &x \in \R^N.
		\end{cases}
	\end{align}
\end{comment}

Now, we are in a position to state our main results. Firstly, we give answer to \cite[Remark 2.3]{LPP} by 
proving the existence of a weak fundamental solution for the operator $\L$ in the sense of Definition \ref{funsolk}
under the following assumption for the first order coefficients $b$ and $c$.
\begin{itemize}
\item[\textbf{(H3A)}] The lower order coefficients $b_i$, with $i=1,\ldots,m_0$, and $c$ are bounded measurable functions of 
		$(x,t) \in \R^{N+1}$, i.e. there exists a positive constant $M$ such that 
		\begin{equation*}
|b(x,t)| \leq M, \quad |c(x,t)| \leq M, \quad \forall (x,t) \in \R^{N+1}.		
		\end{equation*}
\end{itemize}

\begin{theorem}[Existence of the weak fundamental solution]
	\label{main1}
	Let us consider operator $\L$ under the assumptions \textbf{(H1)}-\textbf{(H3A)}. Then there exists a 
	fundamental solution $\GL$ of $\L$ in the sense of Definition \ref{funsolk} and the following properties hold:
	\begin{enumerate}
		\item the reproduction property holds. Indeed, 
		for every $x, y \in \R^{N}$ and $t,\tau \in \R$ with $t_0 < \t < t$ such that 
		$t_0, t \in (T_0, T_1)$: 
		\begin{equation*}
			\GL(x,t; y,t_0) = \int \limits_{\R^{N}} \GL(x,t;\x,\t) \, \GL(\x, \t; y,t_0) \, d\x;
		\end{equation*}
		\item for every $(x,t), (y,t_0) \in \R^{N+1}$ with $t \le t_0$
		we have that $\GL (x,t; y,t_0) = 0$.
	\end{enumerate} 
	Moreover, the function $\GL^{\!\!*}(x, t; y,t_0) = \GL (y,t_0; x,t)$ is the fundamental solution of $\L^*$ and verifies the 
	dual properties of this statement.
\end{theorem}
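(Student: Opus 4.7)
The plan is to construct $\GL$ by approximation: regularize the coefficients, invoke classical existence for smooth operators, and then pass to the limit by exploiting the uniform Gaussian and Hölder estimates established earlier in the paper.

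First I would introduce a standard mollification $(a_{ij}^\varepsilon, b_i^\varepsilon, c^\varepsilon)$ of the coefficients of $\L$ in the $(x,t)$ variables, producing a family of smooth operators $\L_\varepsilon$ that still satisfy \textbf{(H1)}, \textbf{(H3A)} with the \emph{same} constants $\lambda,\Lambda,M$ (the convolution preserves pointwise bounds and ellipticity on $\R^{m_0}$), and obviously still have the same principal part $\K$, so \textbf{(H2)} is unaffected. For each $\varepsilon>0$, since $\L_\varepsilon$ has smooth coefficients and a hypoelliptic, dilation invariant principal part, the classical theory of Polidoro and Lanconelli--Polidoro provides a classical fundamental solution $\GL^\varepsilon$ satisfying all the properties in Definition \ref{funsolk} for $\L_\varepsilon$, together with the reproduction property and the support condition $\GL^\varepsilon(x,t;y,t_0)=0$ for $t\le t_0$.

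Next I would invoke the uniform \emph{a priori} estimates that are the backbone of this paper: two-sided Gaussian bounds of the form
\begin{equation*}
 \frac{1}{C}\,\Gamma^{-}(x,t;y,t_0) \;\le\; \GL^\varepsilon(x,t;y,t_0) \;\le\; C\,\Gamma^{+}(x,t;y,t_0)
\end{equation*}
and interior Hölder estimates depending only on $\lambda,\Lambda,M$ and on the structure of $\K$ (these follow from the Harnack inequality in \cite{AR-harnack} applied to $\L_\varepsilon$, with constants that do not depend on $\varepsilon$). The Gaussian upper bound gives equiintegrability of $\GL^\varepsilon(\cdot,\cdot;y,t_0)$ against test functions, while the Hölder estimate yields local equicontinuity on compact subsets of $\{t>t_0\}$. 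By Ascoli--Arzelà and a diagonal extraction in $(y,t_0)$, I obtain a continuous limit $\GL$ inheriting the two-sided Gaussian bounds.

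I would then check the four items of Definition \ref{funsolk}. The weak formulation \eqref{kolmo} is stable under the limit because $A^\varepsilon Du^\varepsilon \rightharpoonup A Du$ in $L^2$ (testing against $D\phi \in \D$), $b^\varepsilon\to b$ and $c^\varepsilon\to c$ a.e.\ with uniform bounds, and $Yu^\varepsilon \to Yu$ as distributions; the same works for $\L^*$ since the adjoint of a mollified operator is the mollified adjoint. The initial condition is verified through the Gaussian upper bound, which forces $u(\cdot,t)\to\phi$ as $t\to t_0^+$ by the usual approximation-of-identity argument, together with the continuity of $\phi$. The reproduction property passes to the limit from its validity for each $\GL^\varepsilon$ thanks to the Gaussian bounds (uniform integrability) and pointwise convergence. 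Positivity of $\GL$ follows from the strict positivity in the lower Gaussian bound. Finally, the dual statement is obtained by observing that at the smooth level $(\GL^\varepsilon)^*(x,t;y,t_0)=\GL^\varepsilon(y,t_0;x,t)$ is the fundamental solution of $(\L_\varepsilon)^*$, a property which again survives the limit.

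The main obstacle is the passage to the limit itself: one needs the Gaussian and Hölder estimates to be genuinely independent of $\varepsilon$, which is delicate because the drift $Y$ and the lower order term $\langle b,Du\rangle$ interact with the degenerate geometry. This is precisely what the weak regularity results of the authors' previous work \cite{AR-harnack} (and the related \cite{AM,GIMV,GI,GM}) supply; once those estimates are available for $\L_\varepsilon$ with $\varepsilon$-independent constants, the compactness argument above goes through, and checking the initial trace $t\to t_0^+$ in Definition \ref{funsolk}(2) (which requires care since $\GL$ concentrates as a Dirac mass) is the only remaining technical point, handled by the Gaussian upper bound and a cutoff argument.
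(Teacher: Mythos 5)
Your outline matches the paper's (Section \ref{proof2}): mollify the coefficients so that the classical parametrix theory (Theorem \ref{ex-prop}) applies to each $\L_\e$, use the $\e$-uniform two-sided Gaussian bounds of Theorem \ref{gauss-bound-K} to get equiboundedness, extract a limit $\GL$ from $(\GL^{\!\!\e})_\e$ by a diagonal argument over an exhaustion of $\{(x,t;\x,\t):(x,t)\ne(\x,\t)\}$, and then pass the initial condition, reproduction property, positivity, support condition, and duality to the limit by dominated convergence. Up to this scaffolding, you and the paper agree.

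The one genuine gap is in your verification that the limit still satisfies the equation. You propose $C^\alpha$ equicontinuity (from Harnack) plus the claim that $A^\e D\GL^{\!\!\e}\rightharpoonup A\,D\GL$ weakly in $L^2_{\loc}$ against test functions. That weak gradient convergence is not free: it requires an $\e$-independent $L^2_{\loc}$ bound on $D\GL^{\!\!\e}$, i.e.\ a uniform Caccioppoli inequality, which you never establish, and it cannot be recovered by integrating by parts against $\div(A^\e D\phi)$ since $\div A^\e$ is not bounded uniformly in $\e$. The paper avoids this entirely by invoking the Schauder estimates of Theorem \ref{th-1} (from \cite{PRS}, applicable under \textbf{(H3A)} with constants depending only on $\lambda,\Lambda,M,B$ and hence uniform in $\e$) to upgrade the convergence on the compacts $\O_p$ to $C^2$; the limit is then a classical solution of $\L u=0$ away from the pole, hence a fortiori a weak solution in the sense of Definition \ref{weak-sol2}. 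To complete your plan you must either adopt this Schauder-based $C^2$ route, or supply the uniform Caccioppoli bound explicitly (it is available from \cite{AR-harnack} under the present hypotheses, so the gap is fillable, but it is a gap as written).
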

To our knowledge 
this is the first existence result available for the weak fundamental solution
to \eqref{defL} in the sense of the above definition.
Moreover, we emphasize that the PDE approach adopted in this work improves the previously known results in that it allows us 
to consider differential operators with \emph{only measurable} coefficients, which is a milder assumption than the 
usual ones, see \cite{LP, AMP-existence, BP}.

Secondly, we extend \cite[Theorem 1.3]{LPP} providing Gaussian upper and lower bounds for the weak fundamental solution 
$\Gamma$ of $\L$ under the following more general assumption on the lower order coefficients $b$ and $c$. 

\medskip

\begin{itemize}
\item[\textbf{(H3B)}] The coefficients $b \in \left( L^q_{\loc} (S_{T_0T_1}) \right)^{m_0}$, $c \in L^q_{\loc}
	(S_{T_0T_1})$ for some $q >\frac34(Q+2)$. Moreover, $\div b \ge 0$ and $c \le 0$.
\end{itemize}
\begin{remark}	
	As pointed out in \cite[Remark 1.7]{AR-harnack}, we can replace assumption \textbf{(H3B)}  
	with the one firstly considered by Wang and Zhang in \cite{WZ3, WZ-preprint}:
	$b\in L^q_{loc}(S_{T_0T_1})$ for some $q >(Q+2)$ and $c \in L^q_{loc}(S_{T_0T_1})$ for some $q >\frac{Q+2}{2}$,
	with the additional requirement of $c \le 0$. Note that the last assumption on the sign of $c$ is necessary to handle 
	unbounded coefficients, and thus when working with \textbf{(H3A)} we are able to drop it.
\end{remark}

A first result regarding Gaussian upper bounds independent of the H\"older norm of the coefficients is due to Pascucci and Polidoro, who studied operator \eqref{defL} with $b=c=0$ (see \cite[Theorem 1.1]{PP-upper}). 
Later on, Lanconelli, Pascucci and Polidoro \cite[Theorem 4.1]{LPP} extended the Aronson procedure to 
\eqref{defL} with bounded lower order coefficients. 

On the other hand, if we consider Gaussian lower bounds independent of the H\"older norm of the coefficients of operator $\L$, a first result is due to 
Lanconelli, Pascucci and Polidoro \cite[Theorem 1.3]{LPP} for the particular case of the kinetic Kolmogorov-Fokker-Planck equation. The proof of this
result is based on the construction of a Harnack chain starting from a Harnack inequality of the some kind of Theorem \ref{harnack-thm}, 
alongside with the study of the control problem associated to the principal part operator $\K$. 

\begin{theorem}[Gaussian bounds] 
	\label{gauss-bound-K}
	Let $\L$ be an operator of the form \eqref{defL} under the assumptions \textbf{(H1)}-\textbf{(H3B)}.
	Let $I=(T_{0}, T_{1})$ be a bounded interval, then there exist four positive constants 
	$\l^{+}$, $\l^{-}$, $C^{+}$, $C^{-}$ such that 
       \begin{align}\label{gaussboundsform}
		%\label{boundK}
		C^{-} \, \GK^{\! \! \! \! \l^{-}} (x,t; y,T) \, \le \, \GL(x,t; y,T) \, \le \,
		C^{+} \, \GK^{\! \! \! \! \l^{+}} (x,t;y,T)
	\end{align}
	for every $(x,t), (y,T) \in \R^{N+1}$ with 
	$T_{0} <  T < t < T_{1}$.
	The constants $\l^{+}$, $\l^{-}$, $C^{+}$, $C^{-}$ only depend on $B$, $(T_{1}-T_{0})$, $\| b \|_q$ and $\| c \|_q$. Note that
	$\GK^{\! \! \! \! \l^{-}}$ and $\GK^{\! \! \! \! \l^{+}}$ respectively denote the fundamental solution of 
	$\K_{\l^{-}}$ and $\K_{\l^{+}}$, respectively, where
	\begin{equation} \label{Klambda}
		\K^{\l} u(x,t):= \frac{\l}{2} \sum \limits_{i=1}^{m_{0}} \p_{x_{i}}^{2} u(x,t) + 
		\sum_{i,j=1}^N b_{ij}x_j\partial_{x_i}u(x,t)-\partial_t u(x,t),
	\end{equation}
	and the explicit expression of $\GK^{\! \! \! \! \l^{\pm}}$ is 
	\begin{equation} \label{Glambda} 
		\GK^{\! \! \! \! \l} ( x,t;0,0) = \begin{cases}
		\frac{(2 \pi \l)^{-\frac{N}{2}}}{\sqrt{\text{det} C(t)}} \exp \left( - 
		\frac{1}{2\l} \langle C^{-1} (t) x, x \rangle - t \, \trace (B) \right), \hspace{3mm} & 
		\text{if} \hspace{1mm} t > 0, \\
		0, & \text{if} \hspace{1mm} t \le 0.
	\end{cases}
	\end{equation}
\end{theorem}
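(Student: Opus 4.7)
The plan is to establish the upper and lower Gaussian bounds separately, extending the Aronson--type scheme of \cite{PP-upper, LPP} to the weak--solution framework with unbounded lower order coefficients. The starting point in both directions is the fundamental solution $\GL$ provided by Theorem \ref{main1}, together with its reproduction and duality properties.

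\medskip

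For the \emph{upper bound}, I would first deduce an on--diagonal estimate of the form $\GL(x,t;y,T) \le C\,(t-T)^{-Q/2}$, where $Q$ is the homogeneous dimension of $\R^{N+1}$ with respect to the dilation family $(\delta_{r})_{r>0}$. This follows by applying the local boundedness result for weak sub--solutions of $\L$, available from the De Giorgi--Moser theory developed in \cite{AR-harnack}, to $\GL(\cdot,\cdot;y,T)$, and by exploiting the $L^{1}$ mass conservation that is implied by Theorem \ref{main1}(1). The Gaussian decay is then produced by an Aronson--type weighted energy argument: testing the equation against $e^{2\alpha\psi}\GL$, where $\psi$ mimics the quadratic exponent of \eqref{Glambda} suitably translated to $(y,T)$, yields a differential inequality for $\int e^{2\alpha\psi}|\GL|^{2}$ whose right hand side only involves $|D_{m_{0}}\psi|^{2}$ and $Y\psi$. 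A careful choice of $\alpha$, together with the integrability threshold $q>\tfrac{3}{4}(Q+2)$ of \textbf{(H3B)} used via Sobolev interpolation to absorb the $b$ and $c$ contributions, delivers the desired exponent $\l^{+}$, with constants depending only on $B$, $T_{1}-T_{0}$, $\|b\|_{q}$ and $\|c\|_{q}$.

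\medskip

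For the \emph{lower bound}, I would adopt the Harnack chain construction originally introduced for Kolmogorov operators in \cite{LPP}. Applied to the non--negative weak solution $z\mapsto \GL(z;y,T)$, the invariant Harnack inequality Theorem \ref{harnack-thm} gives, for every pair of points lying in a cylinder correctly positioned in the $\delta_{r}$--geometry, a pointwise comparison with a fixed multiplicative constant. Since the degeneracy $m_{0}<N$ prevents us from joining two arbitrary points in a single Harnack step, I would solve the control problem associated with the principal part $\K$ and exhibit a finite path from $(y,T)$ to $(x,t)$ along which consecutive points lie in admissible Harnack cylinders. Iterating the inequality along such a chain, and initialising from an on--diagonal lower bound obtained through the reproduction property, produces $\GL(x,t;y,T)\ge C^{-}\,\GK^{\! \! \! \! \l^{-}}(x,t;y,T)$, because the accumulated cost of the optimal chain scales, via the explicit expression \eqref{Glambda}, exactly as the Gaussian exponent of $\K^{\l^{-}}$.

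\medskip

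The main obstacle is precisely the \emph{construction and optimisation of the Harnack chain} in the degenerate regime $m_{0}<N$: one must exhibit a trajectory compatible with the drift $Y$ so that each step is reachable by an admissible control, while simultaneously minimising the total number of steps, since each of them contributes a multiplicative constant that has to match, in the limit, the Gaussian profile of $\GK^{\! \! \! \! \l^{-}}$. A subsidiary difficulty, specific to \textbf{(H3B)}, is handling the unbounded lower order coefficients: the precise integrability threshold $q>\tfrac{3}{4}(Q+2)$ is used both to preserve the Harnack inequality and to absorb the $b,c$ terms in the weighted Caccioppoli estimates underlying the upper bound.
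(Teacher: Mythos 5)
Your overall split into an Aronson--type weighted energy estimate for the upper bound and a Harnack--chain argument for the lower bound is exactly the structure the paper follows (it reduces the chain argument to a global Harnack inequality, Theorem \ref{globalharnack}, imported essentially verbatim from \cite{LPP}). Two points, however, are genuine gaps. First, your claim that the on--diagonal lower bound $\GL(y,t;y,t_{0})\ge c\,(t-t_{0})^{-Q/2}$ is ``obtained through the reproduction property'' does not hold up: the reproduction identity alone gives no lower bound, because it says nothing about where the mass of $\GL(\cdot,s;y,t_{0})$ sits. What the paper actually does (Lemma \ref{boundDR}) is combine a maximum--principle argument showing $\int_{\R^{N}}\GL(x,t;y,t_{0})\,dx\ge e^{-c_{3}(t-t_{0})}$ with the already--proved Gaussian \emph{upper} bound to show that this mass concentrates on a $\delta$--ball of radius $\sim\sqrt{t-t_{0}}$, whence $\int_{D_{R}}\GL\ge c_{2}$; feeding this into the global Harnack inequality then produces the on--diagonal lower bound. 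The upper bound is therefore an essential input to the lower bound, a dependence your outline does not register. Without that step your chain has nothing to iterate from.

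Second, there is a logical ordering problem: you take ``the fundamental solution $\GL$ provided by Theorem \ref{main1}'' as the starting point, but Theorem \ref{main1} holds under \textbf{(H3A)}, not the weaker \textbf{(H3B)} of the statement you are proving, and its proof in Section \ref{proof2} in fact \emph{uses} the uniform Gaussian bounds of Theorem \ref{gauss-bound-K} on the regularized kernels $\GL^{\!\!\!\e}$ to pass to the limit. The bounds must therefore be proved as a priori estimates for (regularized or abstractly given) fundamental solutions, independently of the existence theorem, otherwise the argument is circular. Apart from these two issues your remaining observations --- the role of the control problem for $\K$ in building admissible Harnack cylinders, and the use of the integrability threshold $q>\tfrac34(Q+2)$ to absorb the $b,c$ terms in the Caccioppoli and Sobolev inequalities --- are consistent with what the paper does.
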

%We remark that 

%\\ More precisely, we were able to derive Theorem \ref{gauss-bound-K} under the less restrictive integrability assumption:
%\medskip
%\begin{itemize}
%\item[\textbf{(H3-b)}] The lower order coefficients $b \in \left( L^q_{\loc} (\O) \right)^{m_0}$ and $c \in L^q_{\loc}(\O)$ 
%	for some $q >\frac34(Q+2)$. Moreover, we have that
%	\begin{equation*}
%		\div \,b \ge 0 , c\leq 0. 
%	\end{equation*}
%\end{itemize}
%\medskip

%\medskip\\

\begin{remark}
Theorem \ref{gauss-bound-K} holds true in a more general setting than Theorem \ref{main1}. 
%Indeed, our proof of the lower bound in \eqref{gaussboundsform} is based on a local Harnack which we proved under the mere measurability assumption 
%of the lower order coefficients (see Theorem \ref{harnack-thm} below). We remark that, in this case, the constants $\l^{+}$, $\l^{-}$, $C^{+}$, $C^{-}$ appearing in \eqref{gaussboundsform} also depend on  $\| b \|_q$ and $\| c \|_q$.
%Lastly,
Moreover, since the proof of the upper bound in \eqref{gaussboundsform} does not rely on the Harnack inequality, the 
rightmost inequality of \eqref{gaussboundsform} holds true for the more general operator 
%hence we are able to prove it for the more general operator
\begin{align}\label{Ltilde}
		\tilde{\L} u (x,t) &:=\sum \limits_{i,j=1}^{m_0}\partial_{x_i}\left(a_{ij}(x,t)\partial_{x_j}u(x,t)\right) + 
		\sum \limits_{i=1}^{m_0} b_{i}(x,t) \partial_{x_i} u(x,t) + \\ \nonumber
		&-\sum \limits_{i=1}^{m_0}\p_{x_i}(a_i(x,t)))+\sum \limits_{i,j=1}^N b_{ij}x_j\partial_{x_i}u(x,t)-\partial_t u(x,t)
		+c(x,t)u(x,t),
	\end{align}
with $a \in \left( L^q_{\loc} (S_T) \right)^{m_0}$ and $\div \,a \ge 0 $.  Finally, we observe that, when dealing with bounded first order coefficients, the costants $\l^{+}$, $\l^{-}$, $C^{+}$, $C^{-}$ appearing in \eqref{gaussboundsform} clearly do not depend on $\Vert b \Vert_q$ and $\Vert c \Vert_q$.
\end{remark}

\subsection{Motivation and background}  
Kolmogorov equations appear in the theory of stochastic processes as linear second order parabolic equations with non-negative characteristic form. 
In its simplest form, if $\left(W_t \right)_{t \ge 0}$ denotes a real Brownian motion, the density $p=p(t,v,y,v_{0},y_{0})$ of the stochastic process  $(V_{t}, Y_{t})_{t \ge 0}$
\begin{equation}
   \label{processo}
    \begin{cases}
    V_t = v_{0} + \s W_{t} \\
    Y_t = y_{0} + \int_{0}^{t} V_{s} \, ds
    \end{cases}
\end{equation}
is a solution to one of the simplest strongly degenerate Kolmogorov equation, that is
\begin{equation}
	\label{sdk}
        \tfrac12 \s^{2} \p_{vv} p + v \p_y p = \p_t p,  \qquad t \ge 0, \qquad
    (v,y) \in \R^2.
\end{equation}
In 1934 Kolmogorov provided us with the explicit expression of the density $p=p(t,v,y,v_{0},y_{0})$ of the above equation (see \cite{K1})
\begin{equation}
   \label{sf}
   p(t,v,y, v_{0}, y_{0}) = \tfrac{\sqrt{3}}{2 \pi t^2} 
    \exp \left( - \tfrac{(v-v_{0})^2}{t} - 3 \tfrac{(v-v_{0})(y - y_{0} - t v_{0})}{t^2} - 3 \tfrac{(y - y_{0} - t y_{0})^2}{t^3} \right) \quad t > 0,
\end{equation}
and pointed out it is a smooth function despite the strong degeneracy of \eqref{sdk}. 
This immediately suggested that the operator $\L$ associated to equation \eqref{sdk}
\begin{equation} \label{KolmoR3}
	\L := \tfrac12 \s^{2} \p_{vv} + v \p_y - \p_t, % = X^{2} + Y,
\end{equation}
is hypoelliptic. Indeed, later on H\"ormander considered this operator as a prototype for the family of hypoelliptic operators studied in his seminal work \cite{H}. 

Kolmogorov equations find their application in different research fields. First of all, the process in \eqref{processo} is the solution to the Langevin equation
\begin{equation*}
   %\label{langevin}
    \begin{cases}
    d V_t = d W_{t} \\
    d Y_t =  V_{t} \, dt,
    \end{cases}
\end{equation*}
and therefore Kolmogorov equations are related to every stochastic process satisfying Langevin equation. In particular, several 
mathematical models involving linear and non linear Kolmogorov type equations have also appeared in finance \cite{AD}, 
\cite{B}, \cite{BP} and \cite{DHW}. Indeed, equations of the form \eqref{sdk} appear in various models for pricing of 
path-dependent financial instruments (cf., for instance, \cite{BPVE} \cite{PA}), where, for example the equation
\begin{equation}
    \label{bpv}
     \p_t P + \tfrac12 \s^{2} S^2 \p^2_S P + (\log S ) \p_{A} P + r (S \p_{S} P - P) = 0, \qquad S > 0, 
    \, A, t \in \R
\end{equation}
arises in the Black and Scholes option pricing problem
\begin{equation*}
   %\label{bands}
    \begin{cases}
    d S_t = \m S_{t} dt + \s S_{t} d W_{t} \\
    d A_t =  S_{t} \, dt,
    \end{cases}
\end{equation*}
where $\s$ is the volatility of the stock price $S$, $r$ is the interest rate of a risckless bond and $P= P(S, A, t)$ is the price of the Asian option depending on the price of the stock $S$, the geometric average $A$ of the
past price and the time to maturity $t$. In this framework, knowing that the fundamental solution to the Kolmogorov equation exists is helpful for the study of the option pricing problem and allows us to have various advantages when dealing with numerical simulations. For further information on this topic, we refer to \cite{AMP-existence, BPV}.
 
Moreover, we recall that the Kolmogorov equation is the prototype for a family of evolution equations arising in kinetic theory of gas, which take the following general form 
\begin{equation}
    \label{kt}
    Y u = \mathcal{J} (u).
\end{equation}
In this case, we have that $u=u(v,y,t)$ is the density 
of particles with velocity $v=(v_{1}, \ldots, v_{n})$ and position $y=(y_{1}, \ldots, y_{n})$ at time $t$. 
Moreover, 
\begin{equation*}
    Y u := \sum \limits_{j=1}^n v_j \p_{y_{j}} u + \p_t u
\end{equation*}
is the so called total derivative with respect to time in the phase space 
$\rnn$, and $\mathcal{J} (u)$ is the collision operator, which can be either 
linear or non-linear. For instance, in the usual Fokker-Planck 
equation (cf. \cite{DV}, \cite{R}) we have a linear collision operator  of the form
\begin{equation*}
    \mathcal{J} (u) = \sum \limits_{i,j=1}^n a_{ij} \, \p_{v_{i}, v_{j}}^2 u + \sum 
    \limits_{i=1}^n a_i \, \p_{v_{i}} u + a u
\end{equation*}
where $a_{ij}$, $a_i$ and $a$ are functions of $(y,t)$; $\mathcal{J} (u)$ can
also occur in divergence form
\begin{equation*}
     \mathcal{J} (u) = \sum \limits_{i,j=1}^n \p_{v_i} (a_{ij} \, \p_{v_{j}} u + 
     b_i u ) + \sum \limits_{i=1}^n a_{i} \p_{v_{i}} u + a u.
\end{equation*}
We also mention the following non-linear collision operator of the 
Fokker-Planck-Landau type
\begin{equation*}
    \mathcal{J} (u) = \sum \limits_{i,j=1}^n \p_{v_i} \big(a_{ij} (z,u) \p_{v_{j}} 
    u + b_i(z,u) \big), 
\end{equation*}
where the coefficients $a_{ij}$ and $b_i$ depend both on $z \in \rnn$ and the unknown functions $u$ through some integral expression. 
Moreover, this last operator is studied as a simplified version of the Boltzmann collision operator (see for instance \cite{C}, \cite{L}). For the description 
of wide classes of stochastic processes and kinetic models leading to equations of the previous type, we refer to the classical monographies \cite{C}, \cite{CC} and \cite{DM}.

\subsection{Plan of the paper}
This work is organized as follows. In Section \ref{preliminaries} we recall the properties of the geometrical structure associated to operator $\L$. In Section \ref{proof1} we prove Gaussian lower 
bounds for the fundamental solution associated to operator $\L$ under the assumption \textbf{(H3B)}. In Section \ref{proof2} we prove the existence of a weak fundamental solution for operator $\L$ under the assumption \textbf{(H3A)}.

\subsection*{Acknowledgements}
The first author is funded by the research grant PRIN2017 2017AYM8XW ``Nonlinear Differential Problems via 
Variational, Topological and Set-valued Methods''. 

\setcounter{equation}{0}\setcounter{theorem}{0}
\section{Preliminaries}
\label{preliminaries}
In this section we recall notation and known results about the non-Euclidean 
geometry underlying the operators $\L$ and $\K$. We refer to the survey paper \cite{APsurvey} and the references 
therein for a comprehensive treatment of this subject.

As first observed by Lanconelli and Polidoro in \cite{LP}, the principal part operator $\K$ is invariant with respect to left translations in the group $\mathbb{K}=(\mathbb{R}^{N+1},\circ)$, where the group law is defined by 
\begin{equation}
	\label{grouplaw}
	(x,t) \circ (\xi, \tau) = (\xi + E(\tau) x, t + \tau ), \hspace{5mm} (x,t),
	(\xi, \tau) \in \R^{N+1},
\end{equation}
and
\begin{equation}\label{exp}
	E(s) = \exp (-s B), \qquad s \in \R.
\end{equation} 
Then $\mathbb{K}$ is a non-commutative group with zero element $(0,0)$ and inverse
\begin{equation*}
(x,t)^{-1} = (-E(-t)x,-t).
\end{equation*}
For a given $\zeta \in \R^{N+1}$ we denote by $\ell_{\z}$ the left traslation on $\mathbb{K}=(\R^{N+1},\circ)$ defined as follows
\begin{equation*}
	\ell_{\z}: \R^{N+1} \rightarrow \R^{N+1}, \quad \ell_{\z} (z) = \z \circ z.
\end{equation*}
Then the operator $\K$ is left invariant with respect to the Lie product $\circ$, that is
\begin{equation*}
	\label{ell}
    \K \circ \ell_{\z} = \ell_{\z} \circ \K \qquad {\rm \textit{or, equivalently,}} 
    \qquad \K\left( u( \z \circ z) \right)  = \left( \K u \right) \left( \z \circ z \right),
\end{equation*}
for every $u$ sufficiently smooth.
%for every $\zeta \in \rnn$. Hence, in particular, $\L \circ \ell_{\z}=\ell_{\z} \circ \L$.
%This means that, if $v(x,t) = u \big((\xi, \tau) \circ (x,t)\big)$ and $g(x,t) = f \big((\xi, \tau) \circ (x,t)\big)$, 
%we have 
%\begin{equation*}
%	\K u = f \quad \iff \quad \K v = g.
%\end{equation*}

We recall that, by \cite{LP} (Propositions 2.1 and 2.2), assumption \textbf{(H2)}
is equivalent to assume that, for some basis on $\R^N$, the matrix $B$ takes the following form
\begin{equation}
	\label{B}
	B =
	\begin{pmatrix}
		\OO   &   \OO   & \ldots &    \OO   &   \OO   \\  
		B_1   &    \OO  & \ldots &    \OO    &   \OO  \\
		\OO    &    B_2  & \ldots &  \OO    &   \OO   \\
		\vdots & \vdots & \ddots & \vdots & \vdots \\
		\OO    &  \OO    &    \ldots & B_\k    & \OO
	\end{pmatrix}
\end{equation}
where every $B_j$ is a $m_{j} \times m_{j-1}$ matrix of rank $m_j$, $j = 1, 2, \ldots, \k$ 
with 
\begin{equation*}
	m_0 \ge m_1 \ge \ldots \ge m_\k \ge 1 \hspace{5mm} \text{and} \hspace{5mm} 
	\sum \limits_{j=0}^\k m_j = N.
\end{equation*} 
In the sequel we will assume that $B$ has the canonical form \eqref{B}.
We remark that assumption \textbf{(H2)} is implied by the condition introduced by H\"{o}rmander in \cite{H}
applied to $\K$:
\begin{equation}\label{e-Horm}
{\rm rank\ Lie}\left(\partial_{x_1},\dots,\partial_{x_{m_0}},Y\right)(x,t) =N+1,\qquad \forall \, (x,t) \in \R^{N+1},
\end{equation}
where ${\rm  Lie}\left(\partial_{x_1},\dots,\partial_{x_{m_0}},Y\right)$ denotes the Lie algebra generated by the first order differential operators $\left(\partial_{x_1},\dots,\partial_{x_{m_0}},Y\right)$ computed at $(x,t)$.
Yet another condition equivalent to {\bf (H2)}, (see \cite{LP}, Proposition A.1), is that
\begin{eqnarray}\label{Cpositive}
C(t) > 0, \quad \text{for every $t>0$},
\end{eqnarray}
where
\begin{equation}\label{defC}
	C(t) = \int_0^t \hspace{1mm} E(s) \, A_0 \, E^T(s) \, ds,
\end{equation}
and $E(\cdot)$ is the matrix defined in \eqref{exp}.
Lastly, we recall that H\"{o}rmander explicitely constructed in \cite{H} the fundamental solution of $\K$ as
\begin{equation} \label{eq-Gamma0}
	\GK (z,\zeta) = \GK(\zeta^{-1} \circ z, 0 ), \hspace{4mm} 
	\forall z, \zeta \in \R^{N+1}, \hspace{1mm} z \ne \zeta,
\end{equation}
where
\begin{equation} \label{eq-Gamma0-b}
	\GK ( (x,t), (0,0)) = \begin{cases}
		\frac{(4 \pi)^{-\frac{N}{2}}}{\sqrt{\text{det} C(t)}} \exp \left( - 
		\frac{1}{4} \langle C^{-1} (t) x, x \rangle - t \, \trace (B) \right), \hspace{3mm} & 
		\text{if} \hspace{1mm} t > 0, \\
		0, & \text{if} \hspace{1mm} t \le 0.
	\end{cases}
\end{equation}
In particular, condition \eqref{Cpositive} implies that $\GK$ in \eqref{eq-Gamma0-b} is well-defined.

Let us now consider the second part of assumption \textbf{(H2)}. 
We say that $\K$ is invariant with respect to $(\delta_r)_{r>0}$ if  
\begin{equation}
	\label{Ginv}
      	 \K \left( u \circ \delta_r \right) = r^2 \delta_r \left( \K u \right), \quad \text{for every} \quad r>0,
\end{equation}
for every function $u$ sufficiently smooth. It is known (see Proposition 2.2 of \cite{LP}) that it is possible to read this dilation invariance property in the expression of the matrix $B$ in \eqref{B}. More precisely, $\K$ satisfies \eqref{Ginv} if and only if the matrix $B$ takes the form \eqref{B}.
In this case, we have 
\begin{equation}
	\label{gdil}
	\d_{r} = (\d^0_r,r^2), \qquad \qquad r > 0,
\end{equation}
where 
\begin{equation}
	\label{gdil0}
	\d^0_{r} = \text{diag} ( r \I_{m_0}, r^3 \I_{m_1}, \ldots, r^{2\k+1} \I_{m_\k}), \qquad \qquad r > 0.
\end{equation}
Since $\K$ is dilation invariant with respect to $(\d_r )_{r>0}$, also its fundamental solution $\GK$ is a homogeneous function of degree $- Q$, namely
\begin{equation*}
	\GK \left( \d_{r}(z), 0 \right) = r^{-Q} \hspace{1mm} \GK
	\left( z, 0 \right), \hspace{5mm} \forall z \in \R^{N+1} \setminus
	\{ 0 \}, \hspace{1mm} r > 0.
\end{equation*}
We next introduce a homogeneous norm of degree $1$ with respect to the dilations 
$(\d_{r})_{r>0}$ and a corresponding quasi-distance which is invariant with respect to the group operation \eqref{grouplaw}.
\begin{definition}[Homogeneous norm]
	\label{hom-norm}
	Let $\a_1, \ldots, \a_N$ be positive integers such that
	\begin{equation}\label{alphaj}
		\diag \left( r^{\a_1}, \ldots, r^{\a_N}, r^2 \right) = \d_r.
	\end{equation}
	If $\Vert z \Vert = 0$ we set $z=0$ while, if $z \in \R^{N+1} 
	\setminus \{ 0 \}$ we define $\Vert z \Vert = r$ where $r$ is the 
	unique positive solution to the equation
	\begin{equation*}
		\frac{x_1^2}{r^{2 \a_1}} + \frac{x_2^2}{r^{2 \a_2}} + \ldots 
		+ \frac{x_N^2}{r^{2 \a_N}} + \frac{t^2}{r^4} = 1.
	\end{equation*}
	Accordingly, we define the quasi-distance $d$ by
	\begin{equation}\label{def-dist}
		d(z, w) = \Vert z^{-1} \circ w \Vert , \hspace{5mm} z, w \in 
		\R^{N+1}.
	\end{equation}
\end{definition}
	As $\det E(t) = e^{t \hspace{1mm} \text{\rm trace} \,
		B} = 1$, the Lebesgue measure is invariant with respect to the translation group 
	associated to $\K$. Moreover, since $\det \d_r = r^{Q+2}$, we also have 
	\begin{equation*}
		\meas \left( \Q_r(z_0) \right) = r^{Q+2} \meas \left( \Q_1(z_0) \right), \qquad \forall
		\ r > 0, z_0 \in \R^{N+1},
	\end{equation*}
	where
	\begin{equation}
		\label{hom-dim}
		Q = m_0 + 3 m_1  + \ldots + (2\k+1) m_\k.
	\end{equation}
	The natural number $Q+2$ is called the \textit{homogeneous dimension of} $\R^{N+1}$ \textit{with respect to} 
	$(\d_{r})_{r > 0}$.
	This denomination is proper since the Jacobian determinant of $\d_{r}$ equals to $r^{Q+2}$.

	 We now recall the definition of H\"older continuous function in this framework.
\begin{definition}[H\"older continuity]
    \label{holdercontinuous}
    Let $\a$ be a positive constant, $\a \le 1$, and let $\O$ be an open subset of $\R^{N+1}$. We 
    say that a function $f : \O \longrightarrow \R$ is H\"older continuous with exponent $\a$ in $\O$
    with respect to the group $\mathbb{K}=(\mathbb{R}^{N+1},\circ)$, defined in \eqref{grouplaw}, (in short: H\"older 
    continuous with exponent $\a$, $f \in C^\a_{K} (\O)$) if there exists a positive constant $C>0$ such that 
    \begin{equation*}
        | f(z) - f(\z) | \le C \; d(z,\zeta)^{\a} \qquad { \rm for \, every \, } z, \z \in \O,
    \end{equation*}
where $d$ is the distance defined in \eqref{def-dist}. Moreover, to every bounded function $f \in C^\a_{K} (\O)$ we associate the semi-norm
    	\begin{equation*}
       		[ f ]_{C^{\a} (\O)} = 
       		\hspace{1mm} 
       		 \sup \limits_{z, \z \in \O \atop  z \ne \z} \frac{|f(z) - f(\z)|}{d(z,\zeta)^{\a}}.
        \end{equation*}
    Moreover, we say a function $f$ is locally H\"older continuous, and we write $f \in C^{\a}_{K,\loc}(\O)$,
    if $f \in C^{\a}_{K}(\O')$ for every compact subset $\O'$ of $\O$.
\end{definition}

	\begin{comment}
	Moreover, we recall that the semi-norm $\Vert \cdot \Vert$ is homogeneous of degree $1$ 
	with respect to $(\d_r )_{r>0}$, i.e.
	\begin{equation*}
		\Vert \d_r (x,t) \Vert = r  \Vert  (x,t) \Vert
		\qquad \forall r >0 \hspace{2mm} \text{and} \hspace{2mm} (x,t) \in \R^{N+1}.
	\end{equation*}
	Since in $\R^{N+1}$ all the norms which are $1$-homogeneous with respect to 
	$(\d_r)_{r>0}$ are equivalent, the norm introduced in Definition 
	\ref{hom-norm} is equivalent to the following one
	\begin{equation*}
		\Vert (x,t) \Vert_1 =|t|^{\frac{1}{2}}+|x|, \quad |x|=\sum_{j=1}^N |x_j|^{\frac{1}{\alpha_j}}
		% |x_1|^{\frac{1}{\a_1}} + \ldots + 
		%|x_N|^{\frac{1}{\a_N}} + | t |%^{\frac{1}{2}},
	\end{equation*}
	where the exponents $\alpha_j$, for $j=1,\ldots,N$ were introduced in \eqref{alphaj}. We 
	prefer the norm of Definition \ref{hom-norm} to $\parallel \cdot \parallel_1$ 
	because its level sets (spheres) are smooth surfaces.
	\end{comment}

Furthermore, we introduce the family of slanted cylinders on which we usually study the local
properties of the Kolmogorov equation starting from the unit past cylinder
\begin{eqnarray}\label{unitcylind}
\Q_1 := B_1 \times B_1 \times \ldots \times B_1 \times (-1,0),
\end{eqnarray}
defined through the open balls 
\begin{eqnarray}\label{openball}
B_1 = \lbrace x^{(j)}\!\in\R^{m_j} : \vert x \vert \leq 1 \rbrace,
\end{eqnarray}
where $j=0,\ldots,\kappa$ and $\vert \cdot \vert$ denotes the euclidean norm in $\R^{m_j}$. 
Now, for every $z_0 \in \mathbb{R}^{N+1}$ and $r>0$, we set
\begin{eqnarray}\label{rcylind}
\Q_r(z_0):=z_0\circ\left(\delta_r\left(\Q_1\right)\right)=\lbrace z \in \mathbb{R}^{N+1} \,:\, z=z_0\circ \delta_r(\zeta), \zeta \in \Q_1 \rbrace
\end{eqnarray}
the cylinder centered at an arbitrary point $z_{0} \in \R^{N+1}$ and of radius $r$.

\begin{comment}
Also, we recall that for every cylinder $\Q_{r}(z_{0})$ defined in \eqref{rcylind} there exists a positive constant 
$\overline c$ \cite[equation (21)]{WZ3} such that
\begin{align*}
		 B_{r_{}}(x^{(0)}_{0}) &\times B_{r_{1}^{3}}(x^{(1)}_{0}) 
		\times \ldots \times
		B_{r_{1}^{2\k+1}}(x^{(\k)}_{0}) \times (-r_{1}^{2}, 0] \\
		&\subset
		\Q_{r}(z_{0}) 
		\subset B_{r_{2}}(x^{(0)}_{0}) \times B_{r_{2}^{3}}(x^{(1)}_{0}) 
		\times \ldots \times
		B_{r_{2}^{2\k+1}}(x^{(\k)}_{0}) \times (-r_{2}^{2}, 0],
\end{align*}
where $r_{1}= r/\overline c$ and $r_{2} = \overline c r$. From now on, by abuse of notation we will sometimes 
consider the ball representation instead of the definition \eqref{rcylind}. Laslty, we introduce the definition of 
H\"older continuous functions with respect to our non-Euclidean geometry $(\R^{N+1}, \circ, \{ \d_{r} \}_{r} )$.
\end{comment}

%\setcounter{equation}{0}\setcounter{theorem}{0}
%\section{Fundamental solution for $\L$ with H\"older continuous coefficients} 
%\label{holder}
We conclude this section by presenting an overview of results regarding the \textit{classical} theory and the corresponding definition of fundamental solution for the operator $\L$ under the 
following assumption on the coefficients $a$, $b$ and $c$.
\begin{itemize}
\item[\textbf{(C)}] The matrix $A_0$ satisfies assumption \textbf{(H1)}. Moreover, its coefficients are bounded and locally H\"older continuous of exponent $\a  \in (0,1]$, while 
			   the matrix $B$ has constant entries. The principal part operator $\K$ satisfies assumption \textbf{(H2)}.
			   The coefficients $ b_i$, for $i =1, \ldots, m_0$, and $c$ are bounded and H\"older continuous of exponent $\a \in (0,1]$.
\end{itemize}

First of all, let us recall the notion of Lie derivative $Y u$ of a function $u$ with respect to the vector field $Y$ defined in
\eqref{drift}. A function $u$ is Lie differentiable with respect to $Y$ at the point $(x,t)$ if there exists and is finite
\begin{equation} \label{lie-diff} 
		Yu(x,t) := \lim \limits_{s \rightarrow 0} \frac{u(\g(s)) - u(\g(0))}{s}, \qquad \g(s) = (E(-s) x, t - s).
\end{equation}
Note that $\g$ is the integral curve of $Y$, i.e. $\dot \g (s) = Y(\g(s))$. 
Clearly, if $u \in C^{1}(\O)$, with $\O$ open subset of $\R^{N+1}$, then $Y u (x,y,t)$ agrees with $\sum_{i,j=1}^N b_{ij}x_j\partial_{x_i}u(x,t)-\partial_t u(x,t) $ considered as a linear combination of the derivatives of $u$. Then we are in a position to introduce the notion of classical solution to
$\L u = 0$ under the assumptions \textbf{(C)}.

\begin{definition}
	\label{solution}
	A function $u$ is a solution to the equation $\L u = 0$ in a domain $\O$ of $\R^{N+1}$ under the assumptions \textbf{(C)}
	if the derivatives $\p_{x_i} u, \p_{x_i x_j}^{2} u$, for $i,j=1,\ldots, m_0$, and the Lie derivative $Yu$ exist as continuous functions in $\O$, 
	and the equation $\L u (x,t)= 0$
	is satisfied at any point $(x,t) \in \O$. Moreover, 
	we say that $u$ is a \emph{classical super-solution} 
	 to $\L u = 0$ if $\L u \le 0$.
	We say that $u$ is a \emph{classical sub-solution} if $-u$ is a classical supersolution.
\end{definition}
A fundamental tool in the classical regularity theory for Partial Differential Equations are Schauder estimates.
In particular, we recall the result proved by Manfredini in \cite{M} (see Theorem 1.4)  for classical solutions to 
$\L u = 0$, where the natural functional setting is
 \begin{equation*}
 	%\label{c2alfa}
	C^{2 + \a} (\O) = \left\{ u \in C^{\a} (\O) \; \mid \; \p_{x}u , \p^2_{x} u, Y u \in C^{\a} (\O) \right\},
   \end{equation*}
and $C^{\a} (\O)$ is given in Definition \ref{holdercontinuous}. Moreover, if $u \in C^{2 + \a} (\O)$ then we define the norm
\begin{equation*}
 	%\label{c2alfa-norma}
	| u |_{2 + \a, \O } := | u |_{\a, \O } \; + \; | \p_{x}u |_{\a, \O } \; + \;  |\p^2_{x}u |_{\a, \O }  \; + \;  |Y u |_{\a, \O }.
   \end{equation*}
Clearly, the definition of $C^{2+\a}_{\loc} (\O)$ follows straightforwardly from the definition of $C^{\a}_{\loc} (\O)$. Finally, let us remark that we write $u \in C^2(\O)$ if $u$, its derivatives $\p_{x_i} u, \p_{x_i x_j}^{2} u$, for $i,j=1,\ldots, m_0$, and the Lie derivative $Yu$ exist as continuous functions in $\O$.

As we work with first order coefficients which are not H\"{o}lder continuous but only measurable, we now introduce the Schauder type estimates proved in \cite{PRS}.

First of all, we recall that the modulus of continuity of a function $f$ on any set $H \subset \R^{N+1}$ is defined as follows
%\begin{equation*}
%\omega_f(r) =\sup_{d_K(z, \zeta)<r} |f(x,t)-f(x', s)|.
%\end{equation*}
\begin{equation} \label{eq-omegaf}
\omega_f(r) := \sup_{\substack{(x,t), (\x,\t)\in H \\ d((x,t) ,(\x,\t))<r}} |f(x,t)-f(\x, \t)|.
\end{equation}
Moreover, we recall the following definition.
\begin{definition} \label{d-DC}
A function $f$ is said to be Dini-continuous in $H$ if 
\begin{equation*}
\int_{0}^{1} \frac{\omega_f(r)}{r}dr < + \infty.
%\omega_f(r) =\sup_{d_K(z, \zeta)<r} |f(x,t)-f(x', s)| <\infty.
\end{equation*}
\end{definition}
\begin{comment}
%We refer to {\color{red} Anna cita tuo paper qui \cite{AP, PRS-schauder}} for a more recent bibliography on this subject.
\begin{theorem}
    \label{schauder1}
    Let us consider an operator $\L$ satisfying assumptions \textbf{(C)}, with $\a \in ]0,1]$. 
    Let $\O$ be an open subset of $\R^{N+1}$, $f \in C^{\a}_{\loc}(\O)$ and let $u$ be a classical solution to 
    $\L u = 0$ in $\O$. Then for every $\O^{'} \subset \subset \O^{''} \subset \subset \O$ 
    there exists a positive constant $C$ such that
    \begin{equation*}
        \label{scauder2}
        | u |_{2 + \a, \O^{'} } \le C   \sup \nolimits_{\O^{''}} |u| .
	%+ \; |f|_{\a, \O^{''} } 
    \end{equation*}
\end{theorem}
{\color{red} Controllare che le norme e seminorme holderiane siano spiegate bene.} {\color{violet} Secondo me 
manca la norma h\"olderiana, perché nella definizione abbiamo messo la seminorma.}
\end{comment}
We are now in position to state the following result (see \cite[Theorem 1.6]{PRS}).

\begin{theorem} \label{th-1}
Let $\L$ be an operator in the form \eqref{defL} satisfying hypothesis {\rm \bf(H1)-(H3A)}. Let $
u$ be a classical solution to $\L u=f$. Suppose that $f$ is Dini continuous. Then there exists a positive constant $c$, only depending on the operator $\L$, such that:
\begin{description}
 \item[{\it i)}]
 \begin{equation*}
\vert \partial^2 u(0,0) \vert \le c \left(\sup_{\Q_1(0,0)} \vert u \vert + \vert f(0,0) \vert +\int_{0}^1 \frac{\omega_{f}(r)}{r}dr \right);
\end{equation*}
 \item[{\it ii)}] for any points $(x,t)$ and $ (\x,\t) \in \Q_{\frac14}(0,0)$ we have
\begin{equation*}
|\partial^2 u(x,t)- \partial^2 u(\x, \t)| \le c\left(d \sup_{\Q_{1}(0,0)} |u| + d \sup_{\Q_{1}(0,0)}|f|  + \int_0 ^d \frac{\omega_{f}(r)}{r}dr+ d \int_{d} ^1 \frac{\omega_{f}(r)}{r^2}dr\right).
\end{equation*}
where $d: =d ( (x,t), (\x,\t))$ and $\partial^2$ stands either for $\partial^2_{x_i x_j}$, with $i,j=1,\ldots,m$, or for $Y$.
\end{description}
\end{theorem}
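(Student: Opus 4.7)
The plan is to adapt the Campanato--Wang polynomial approximation scheme to the non-Euclidean geometry $(\R^{N+1}, \circ, (\d_r)_{r>0})$ recalled in Section \ref{preliminaries}. This is the classical route to pointwise Schauder estimates with Dini forcing, and the framework is well suited to it because the principal part $\K$ is both left-invariant under $\circ$ and homogeneous of degree $2$ with respect to $(\d_r)_{r>0}$. The idea is to freeze $f$ on nested cylinders $\Q_{r_k}(0,0)$ with $r_k = 2^{-k}$ and to approximate $u$, on each such cylinder, by a polynomial of $\K$-homogeneous degree at most $2$ that exactly solves $\K P = f(0,0)$; the residual oscillation of $f$ then enters the error through $\omega_f$.

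The core decay estimate behind the iteration is the following. For a classical solution $u$ of $\L u = f$ in $\Q_1(0,0)$ there exist a polynomial $P$ of $\K$-homogeneous degree at most $2$ (with coefficients controlled by $\sup_{\Q_1}|u|$ and $|f(0,0)|$) and a universal exponent $\a_0 \in (0,1)$ such that
\begin{equation*}
\sup_{\Q_r(0,0)} |u - P| \le C\, r^{2+\a_0} \sup_{\Q_1(0,0)}|u| + C\, r^2 |f(0,0)| + C \int_0^r \frac{\omega_f(\r)}{\r}\,d\r
\end{equation*}
for every $0 < r \le 1/2$. One obtains this by splitting $u = v + w$, where $v$ solves $\L v = f(0,0)$ with the same boundary data as $u$ (so $v$ is a $\K$-caloric perturbation of the explicit quadratic $P$ and enjoys interior estimates of every order by the Schauder theory of \cite{M}), and $w=u-v$ solves an equation whose right-hand side has $L^\infty$ norm at most $\omega_f(1)$. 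The Dini integral then appears by subdividing $[0,r]$ into dyadic annuli and applying a maximum-principle-type bound scale by scale.

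Iterating the decay estimate on the geometric sequence $r_k = 2^{-k}$ produces polynomials $P_k$ of $\K$-homogeneous degree at most $2$ whose coefficients are uniformly controlled; comparing $P_k$ and $P_{k+1}$ on $\Q_{r_{k+1}}$, where both approximate $u$, yields
\begin{equation*}
|\p^2 P_{k+1}(0,0) - \p^2 P_k(0,0)| \le C\, 2^{-\a_0 k} \sup_{\Q_1(0,0)}|u| + C\, \omega_f(r_k),
\end{equation*}
a summable sequence thanks to the Dini assumption. Hence $\p^2 P_k(0,0)$ converges, and its limit equals $\p^2 u(0,0)$ because $P_k \to u$ in $L^\infty$ near the origin. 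Summing the telescoping series and using that $\sum_k \omega_f(r_k)$ is comparable with $\int_0^1 \omega_f(\r)/\r\,d\r$ yields part $(i)$. For part $(ii)$, the same scheme is run at both base points $(x,t)$ and $(\x,\t)$ down to scale $d := d((x,t),(\x,\t))$; comparing the two limiting polynomials on a cylinder of size $\sim d$ that contains both points splits the contribution into a near-scale piece bounded by $\int_0^d \omega_f(\r)/\r\,d\r$ and a far-scale piece bounded by $d\int_d^1 \omega_f(\r)/\r^2\,d\r$, the latter arising via Abel summation of the differences of polynomial coefficients across the scales between $d$ and $1$.

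The main obstacle is that $b, c$ in \textbf{(H3A)} are only measurable, hence they cannot be absorbed into the freezing step: the polynomial $P$ must solve $\K P = f(0,0)$ rather than $\L P = f(0,0)$. The remedy is to treat $\langle b, Du\rangle + c u$ as part of the forcing in the iteration. Since \textbf{(H3A)} guarantees $|b|, |c| \le M$, the new forcing lies in $L^\infty$ with norm bounded by $\sup|u| + \sup|Du|$; but to avoid spoiling the sharpness of the second-order bound, $Du$ must itself be controlled with a Dini modulus at each scale. This forces one to run a companion iteration with polynomial approximations of $\K$-homogeneous degree $1$ and to close the two bootstraps simultaneously. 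Implementing this tandem argument, without any further regularity on $b, c$, is precisely the technical contribution of \cite{PRS}.
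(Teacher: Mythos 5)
The overall framework you describe---a Campanato/Wang polynomial approximation at dyadic scales $r_k=2^{-k}$ in the non-Euclidean geometry, with decay of $\sup_{\Q_{r}}|u-P|$ driven by $\omega_f$---is indeed the right one, and it matches what \cite[Section 6]{PRS} does. But your key decay estimate, and hence the whole iteration, has a genuine gap: it cannot hold for the operator $\L$ as you set it up, because you never freeze the leading coefficients. Under \textbf{(H1)} the $a_{ij}$ are only measurable. In your split $u=v+w$ you take $v$ to solve $\L v=f(0,0)$ ``with the same boundary data''; but $\L$ still has rough $A(x,t)$, so $v$ is \emph{not} a $\K$-caloric perturbation of a quadratic and the interior Schauder theory of \cite{M} does not apply to it. The correct freezing step is to let $v$ solve the constant-coefficient problem $\K_{A(0,0)}v=f(0,0)$; then $w=u-v$ picks up a forcing term containing $\langle (A(\cdot,\cdot)-A(0,0))D_{m_0}u,D_{m_0}\varphi\rangle$, and closing the iteration requires a Dini modulus for $A$ as well. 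Without some continuity of $A$ the claimed geometric decay
\begin{equation*}
\sup_{\Q_r(0,0)}|u-P|\le C\,r^{2+\a_0}\sup_{\Q_1(0,0)}|u|+C\,r^2|f(0,0)|+C\int_0^r\frac{\omega_f(\r)}{\r}\,d\r
\end{equation*}
is simply false (already for uniformly elliptic $\div(A\nabla u)=0$ with $A\in L^\infty$ one cannot have pointwise second-derivative estimates). The theorem as written in the paper is in fact only invoked for the regularized operators $\L_\e$ of Section~\ref{proof2}, whose coefficients are Lipschitz and hence Dini continuous; the Remark immediately after the statement says precisely that the proof is \cite[Section~6]{PRS} \emph{combined with} the Dini continuity of the (mollified) coefficients established along the lines of Section~\ref{proof2}.

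Your closing paragraph also misattributes the role of the lower-order terms. You propose to keep $b,c$ merely bounded measurable, move $\langle b,Du\rangle+cu$ to the forcing, and run a ``tandem'' iteration for first- and second-order polynomial approximations; and you claim that this is the technical contribution of \cite{PRS}. It is not. The contribution of \cite{PRS} (as its title and the Remark indicate) is the Dini-level pointwise Schauder estimate in the degenerate Kolmogorov geometry under \emph{Dini continuous} $a_{ij},b_i,c$. The route the present paper actually takes is to mollify all coefficients as in Section~\ref{proof2}, obtaining Lipschitz/Dini regularizations with norms controlled uniformly by the \textbf{(H3A)} bound $M$, apply \cite[Section~6]{PRS} to each $\L_\e$, and exploit the uniformity in $\e$. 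Your proposal therefore diverges from the paper's argument at exactly the step that makes the theorem true, and the tandem bootstrap you sketch is not needed once the coefficients are regularized.
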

\begin{remark}
	The proof of the above statement is derived applying the techniques of \cite[Section 6]{PRS} combined with the proof of 
	the Dini continuity of the coefficients following the lines of Section \ref{proof2}, provided that \textbf{(H3A)} holds true.
\end{remark}

Lastly, we recall that the existence of a fundamental solution $\G$ for the operator $\L$ under the regularity assumptions 
\textbf{(C)} has widely been investigated over the years, and the Levy parametrix method provides us with a classic fundamental solution. Among the first results of this type we recall \cite{Weber}, \cite{IlIn} and \cite{Sonin} and we remark that this method was firstly considered 
in this setting by Polidoro in \cite{P}, and then later on extended in the works \cite{PDF, PODF}.
In particular, we report here the existence result of a classical fundamental solution for $\L$ proved in \cite[Theorem 1.4-1.5]{PDF}.
\begin{theorem}
	\label{ex-prop}
	Let us consider an operator $\L$ of the form \eqref{defL} under the assumptions 
	\textbf{(C)}. Then there exists a fundamental solution 
	$\G: \R^{N+1} \times \R^{N+1} \rightarrow \R$ for $\L$ with the following properties:
	\begin{enumerate}
	 	\item  $\G (\cdot, \cdot; \x, \t) \in L^{1}_{\loc} (\R^{N+1})
			\cap C(\R^{N+1} \setminus \{ ( \x, \t )\}$ for every $(\x, \t) \in \R^{N+1}$;
		\item $\G (\cdot, \cdot; \x, \t)$ is a classical solution of $\L u = 0$ in $\R^{N+1} \setminus \{ (\x, \t)  \}$
			for every $(\x, \t) \in \R^{N+1}$ in the sense of Definition \ref{solution};
		\item let $\phi \in C(\R^{N})$ such that for some positive constant $c_{0}$ we have
			\begin{equation} \label{cond1}
				| \phi (x) | \le c_{0} e^{c_{0} |x|^{2}} \qquad \text{for every  } \, x \in \R^{N},
			\end{equation}
			then there exists 
			\begin{equation} \label{cond2}
				\lim \limits_{(x,t) \to (x_{0}, \t) \atop t > \t} 
				\int \limits_{\R^{N}} \G (x,t; \x, \t) \phi(\x) d\x = \phi(x_{0})
				\qquad \text{for every  } \, x_{0} \in \R^{N};
			\end{equation}
		\begin{comment}
		\item let $\phi \in C(\R^{N})$ verifying \eqref{cond1} and $f$ be a continuous function in the strip
		$S_{T_{0}, T_{1}} = \R^{N} \times ]T_{0}, T_{1}[$, such that 
		\begin{equation} \label{cond3}
			| f(x,t) | \le c_{1} e^{c_{1} |x|^{2}} \qquad  \text{for every  } \, (x,t) \in S_{T_{0}, T_{1}} 
		\end{equation}
		and for any compact subset $M$ of $\R^{N}$ there exists a positive constant $C$ such that 
		\begin{equation} \label{cond4}
			| f(x,t) - f(y,t) | \le C \| x - y \|^{\b} \qquad  \text{for every  } \, (x,t) \in M, \, \, t \in ]T_{0}, T_{1}[,
		\end{equation}
		for some $\b \in ]0,1[$. Then there exists $T \in ]T_{0}, T_{1}]$ such that the function
		\begin{equation} \label{rep-prop}
 				 u(x,t)=\int_{\R^{N}} \G(x,t;\x,T_{0}) \, \phi(\x) \, d\x -
				 \int \limits_{T_{0}}^{t} \int_{\R^{N}} \G(x,t; \x, \t) f(\x, \t) \, d\x \, d\t 
		\end{equation}
			is a classical solution to the Cauchy problem 
		\begin{equation} \label{cauchyproblem}
			\begin{cases}
				\L u = f \qquad &\text{in  } \, \, S_{T_{0}, T} , \\
				u(\cdot, T_{0}) = \phi \qquad &\text{in  } \, \, \R^{N};
			\end{cases}
		\end{equation}
		%\item if $u$ is a solution to the Cauchy problem \ref{cauchyproblem} with null $f$ and $\phi$
		%	and verifies estimate \eqref{cond1}, then $u = 0$. In particular, the function \eqref{rep-prop}
		%	is the unique solution to \eqref{cauchyproblem} verifying estimate \eqref{cond1};
		\end{comment}
		\item the reproduction property holds. Indeed, 
		for every $x, \x \in \R^{N}$ and $t,\tau \in \R$ with $\t < s < t$: 
		\begin{equation*}
			\G(x,t; \x,\t) = \int \limits_{\R^{N}} \G(x,t;y,s) \, \G(y,s; \x, \t) \, dy;
		\end{equation*}
		\item for every $(x,t), (\x, \t) \in \R^{N+1}$ with $t \le \t$
		we have that
			$\G (x,t; \x, \t) = 0$;
		%\item if the lower order term $c(x,t) = c$ is constant, then 
		%\begin{equation*} 
 		%	\int \limits_{\R^{2}} \GK(x,t; \x,  \t) \; d \x 
 		%	= e^{-c(t - \tau)} \qquad \text{for every  } \, x \in \R^{N}, \, \t < t.
 		%\end{equation*}
	\end{enumerate} 
	%\item for every $\l^+ > \l$ and for every positive $0 < T_0<T_1$, there exists a constant $C^+$ , only dependent on $\l$, $B$ and $T$
	%	such that 
	%	\begin{align}
	%			\label{boundK-upper}
	%			 \G(x,t; \x, \t) \, \le \, C^{+} \, \G^{+} (x,t; \x, \t) \\ \nonumber
	%			|\p_{x_i}\G(x,t; \x, \t)| \, \le \, \frac{C^{+}}{\sqrt{t -\t}} \, \G^{+} (x,t; \x, \t)  \\ \nonumber
	%			|\p_{x_i x_j}^2 \G(x,t; \x, \t)| \, \le \, \frac{C^{+}}{t -\t} \, \G^{+} (x,t; \x, \t)  \\ \nonumber
	%			|Y \G(x,t; \x, \t)| \, \le \, \frac{C^{+}}{t -\t} \, \G^{+} (x,t; \x, \t) 
	%	\end{align}
	%	for any $i,j=1, \ldots, m_0$ and $(x,t), (\x, \t) \in \R^{N+1}$ with $T_0 < t -\t < T_1$, and where
	%	$\G^{+}$ denotes the fundamental solution of $\K_{\l^{+}}$, defined in \eqref{Glambda} and \eqref{Klambda} respectively.	
	Moreover, if for every $i,j = 1, \ldots, m_{0}$ there exist the derivatives 
	 $\p_{x_{i}}a_{ij}, \p_{x_{i}}b$ $\in C^{\a}(\R^{N+1})$ and are bounded functions, then
	 there exists a fundamental solution 
	$\G^{*}$ to $\L^{*}$  verifying the dual properties of this statement and 
	\begin{equation*}
		\G^{*}(x, t; \x, \t) = \G (\x,\t; x,t) \qquad \text{for every  } \, \, (x,t), (\x, \t) \in \R^{N+1}, \, (x,t) \ne (\x, \t).	
	\end{equation*}
\end{theorem}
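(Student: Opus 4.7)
The strategy is the classical Levi parametrix method adapted to the Lie-group structure $(\R^{N+1},\circ,\{\delta_r\}_{r>0})$ associated with $\K$, following the route opened by Polidoro \cite{P} and refined in \cite{PDF,PODF}. The starting point is the explicit fundamental solution $\GK$ of the principal part $\K$ given in \eqref{eq-Gamma0-b}, whose existence is guaranteed by \eqref{Cpositive} (equivalent to \textbf{(H2)}).

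The plan is as follows. First, I would construct the \emph{parametrix}: for each pole $(\x,\t)\in\R^{N+1}$, freeze the matrix $A_0$ at $(\x,\t)$ and denote by $Z(x,t;\x,\t)$ the fundamental solution of the resulting constant-coefficient Kolmogorov operator $\K_{(\x,\t)}$. Because the frozen operator still has the form \eqref{defK} with a positive definite diffusion (by \textbf{(H1)}), Kolmogorov's explicit expression of the form \eqref{eq-Gamma0-b}, with $A_0$ replaced by $A_0(\x,\t)$, gives $Z$ together with Gaussian estimates and explicit expressions for $\p_{x_i} Z$, $\p^2_{x_ix_j}Z$ and $YZ$. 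Second, I would search for the fundamental solution in the Volterra form
\begin{equation*}
\G(z;\zeta) \,=\, Z(z;\zeta) \,+\, \int_\t^t\!\!\int_{\R^N} Z(z;z')\,\Phi(z';\zeta)\,dz',
\end{equation*}
where $z=(x,t)$, $\zeta=(\x,\t)$. Applying $\L$ and matching the Dirac mass, a formal computation shows that the unknown kernel $\Phi$ must satisfy the integral equation
\begin{equation*}
\Phi(z;\zeta) \,=\, J(z;\zeta) \,+\, \int_\t^t\!\!\int_{\R^N} J(z;z')\,\Phi(z';\zeta)\,dz',\qquad J(z;\zeta):=(\L-\L_{(\x,\t)})Z(z;\zeta),
\end{equation*}
where $\L_{(\x,\t)}$ is the operator obtained by freezing $A_0,b,c$ at $\zeta$.

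The crucial estimate is that $J$ gains an $\alpha$-power of the quasi-distance: using H\"older continuity of $a_{ij},b_i,c$ (assumption \textbf{(C)}) and the Gaussian bounds on the second order derivatives of $Z$, one obtains
\begin{equation*}
|J(z;\zeta)| \,\le\, C\,\|\zeta^{-1}\circ z\|^{\a-(Q+2)}\,\exp\!\Bigl(-c\,\|\zeta^{-1}\circ z\|^2/(t-\t)\Bigr),
\end{equation*}
so that the singularity order drops from $Q+2$ to $Q+2-\a$. With this gain, the series $\Phi=\sum_{k\ge1} J^{\boxtimes k}$ obtained by iterating the Volterra kernel converges locally uniformly, thanks to a reproduction-type estimate for $\GK$ of the form
\begin{equation*}
\int_\s^t\!\!\int_{\R^N} \|\zeta_1^{-1}\!\circ z\|^{\a-Q-2}\|\zeta^{-1}\!\circ z_1\|^{\a-Q-2}\,dz_1 \,\le\, C\,B(\a,\a)\,\|\zeta^{-1}\!\circ z\|^{2\a-Q-2},
\end{equation*}
controlled via the homogeneity of degree $Q+2$ of Lebesgue measure under $\delta_r$ and the Gaussian tails of $Z$. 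Once $\Phi$ is constructed, dominated convergence justifies passing $\L$ under the integral sign, giving $\L\G(\cdot;\zeta)=0$ away from $\zeta$, which is property \textit{(2)} of the statement in the sense of Definition \ref{solution}.

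Finally, I would verify the remaining items. Property \textit{(1)}, $\G\in L^1_{\loc}\cap C$ off the diagonal, follows from the Gaussian upper bound inherited from $Z$ and the $\a$-gain on the correction term. Property \textit{(3)}, the initial datum, follows from the well-known fact that $\int Z(x,t;\cdot,\t)\phi$ recovers $\phi(x_0)$ as $(x,t)\to(x_0,\t)$ with $t>\t$, combined with the fact that the correction term tends to zero because of its milder singularity; the exponential growth condition \eqref{cond1} is absorbed by the Gaussian decay of $Z$ for $t-\t$ small. Property \textit{(4)}, the Chapman–Kolmogorov/reproduction identity, is proved by observing that both sides solve the same Cauchy problem (with a uniqueness result for solutions with Gaussian growth) and agree as $s\downarrow\t$. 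Property \textit{(5)}, vanishing for $t\le\t$, is built into the ansatz since $Z$ vanishes for $t\le\t$. For the dual statement, under the additional regularity $\p_{x_i}a_{ij},\p_{x_i}b\in C^\a(\R^{N+1})$ the adjoint $\L^*$ defined in \eqref{Ladj} again satisfies assumptions \textbf{(C)} (with matrix $-B$, whose associated group law is the one for backward time), so applying the same construction yields $\G^*$; the identity $\G^*(x,t;\x,\t)=\G(\x,\t;x,t)$ follows from Green's identity applied on the strip $(\t,t)\times\R^N$, using the Gaussian decay to discard boundary terms at spatial infinity.

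The main technical obstacle I expect is the \emph{singular integral bookkeeping} in the iteration for $\Phi$: one must combine sharp pointwise Gaussian bounds on $Z$ and its derivatives (which behave like $\|\zeta^{-1}\circ z\|^{-(Q+2)}$ times a Gaussian) with the non-Euclidean translations \eqref{grouplaw} to produce a usable convolution-type inequality on $\lL$. Because $\circ$ is non-commutative and the parametrix is frozen at the pole rather than at the source point, the standard Euclidean convolution identities have to be replaced by the anisotropic reproduction properties of $\GK$ and a careful change of variables using $\det E(s)=1$ and $\det\delta_r=r^{Q+2}$. Once these estimates are in place, the rest is a standard fixed-point argument.
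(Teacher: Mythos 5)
Theorem \ref{ex-prop} is not proved in this paper: it is imported verbatim from Di Francesco and Pascucci \cite{PDF} (Theorems 1.4--1.5 there), and the paper explicitly attributes the construction to the Levi parametrix method as developed in \cite{P,PDF,PODF}. Your proposal is a faithful and correct outline of exactly that method -- parametrix $Z$ obtained by freezing $A_0$ at the pole, Volterra integral equation for the correction kernel $\Phi$, iterated-kernel series converging thanks to the $\alpha$-gain in the singularity order from $Q+2$ to $Q+2-\alpha$, and the verification of properties (1)--(5) from the Gaussian estimates on $Z$ and the milder singularity of the correction term. Two small points you may want to tighten if you ever write this out in full: the parametrix in this anisotropic setting is usually taken as $Z(z;\zeta)=\Gamma^{\zeta}(z;\zeta)$ with coefficients frozen at $\zeta$, which is what you did, but the precise form of the convolution inequality you sketch should be checked against the non-commutativity of $\circ$ (the kernel $J^{\boxtimes k}$ involves left translations, and the change of variables must exploit $\det E(s)=1$ exactly as you indicate); and in the uniqueness argument for the reproduction property one needs a uniqueness theorem for the Cauchy problem in the class of functions with Gaussian growth, which in \cite{PDF} is obtained via a maximum principle on the strip. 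These are exactly the technical obstacles you flagged yourself, so the proposal is essentially the same route the cited reference follows.
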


\setcounter{equation}{0}\setcounter{theorem}{0}
\section{Proof of Theorem \ref{gauss-bound-K}}
\label{proof1}
This section is devoted to the proof of Gaussian bounds for the fundamental solution defined in Definition \ref{funsolk}. 
We remark that all the other results in this section are obtained under the less restrictive assumptions \textbf{(H3B)}.
Now, let us begin with some preliminary results. Quite recently, there have been various developments in the study of weak solutions to $\L u =  0$, and in particular the following Harnack inequality \cite[Theorem 1.4]{AR-harnack} holds true in our setting.
%(see Theorem \ref{harnack-thm}).

\begin{theorem}[Harnack inequality]
	\label{harnack-thm}
	Let $\Q^0=B_{R_0}\times B_{R_0}\times \ldots \times B_{R_0} \times(-1,0] $ 
	and let $u$ be a non-negative weak solution to $\L u = 0$ in 
	$\O\supset \Q^0$ under the assumptions \textbf{(H1)}-\textbf{(H3B)}. 
	Then
	\begin{eqnarray}\label{harnackAR}
		\sup \limits_{\Q_{-}} u \, \le \, C   \inf_{\Q_+} u   ,
	\end{eqnarray}
	where $\Q_+=B_{\omega}\times B_{\omega^3}\times\ldots \times B_{\omega^{2\kappa+1}}\times 
	(-\omega^2,0]$
	and $\Q_-=B_{\omega}\times B_{\omega^3}\times\ldots \times B_{\omega^{2\kappa+1}}\times 
	(-1,-1+\omega^2]$. 
	Moreover, the constants $C$, $\omega$ and $R_0$ only depend on the homogeneous dimension 
	$Q$ defined in \eqref{hom-dim}, $q$ and on the ellipticity constants $\lambda$ and $\Lambda$ in 
	\eqref{hypcost}. 
\end{theorem}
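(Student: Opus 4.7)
The plan is to adapt the classical De Giorgi/Moser iteration scheme to the degenerate Kolmogorov setting, following the strategy of Golse--Imbert--Mouhot--Vasseur and Wang--Zhang developed for the kinetic Fokker--Planck equation. The first step is a Caccioppoli energy inequality: testing \eqref{kolmo} against $\phi = u \eta^2$ for a suitable cut-off $\eta$ supported in $\Q^0$, using hypothesis \textbf{(H1)} together with the sign conditions $\div b \ge 0$ and $c \le 0$ from \textbf{(H3B)}, and absorbing the contributions of the lower order coefficients via H\"older and Sobolev inequalities (where the Sobolev exponent requirement $q > \tfrac{3}{4}(Q+2)$ is used). This yields control of $\|D_{m_0} u\|_{L^2}$ on inner cylinders by $\|u\|_{L^2}$ on outer cylinders.

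Next, to convert the partial gradient bound into a gain of integrability in all $N+1$ variables, I would invoke a hypoelliptic Sobolev embedding adapted to the natural functional space $\W$: any $u \in \W$ with $D_{m_0} u \in L^2$ and $Y u \in L^2 H^{-1}$ belongs to some $L^{p_*}$ with $p_* > 2$ depending on the homogeneous dimension $Q$. This step uses the fundamental solution $\GK$ of $\K$ and its scaling, together with a velocity-averaging or kinetic-regularization lemma; in the present setting such an embedding is available in the extended form proved in \cite{AR-harnack}. Iterating the Caccioppoli inequality together with this embedding in the Moser scheme then gives a local boundedness estimate $\sup_{\Q_\rho} u \le C \|u\|_{L^p(\Q_r)}$ for weak sub-solutions, for any $p>0$, on concentric cylinders of the form \eqref{rcylind}.

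The weak Harnack inequality for non-negative super-solutions is the main obstacle and constitutes the heart of the proof. The plan is to test the equation for $v = u + \epsilon$ against $\phi = v^{-1}\eta^2$ to obtain a log-estimate, which one converts into a BMO-type bound for $\log v$ on the cylinders $\Q_r(z_0)$; then a John--Nirenberg argument upgrades this to an exponential integrability statement, which combined with the local boundedness for negative powers of $v$ yields the weak Harnack on past cylinders. The genuine difficulty is the Poincar\'e inequality underlying the BMO step: because the diffusion acts only in $m_0 < N$ directions, one cannot control oscillations in the $x^{(1)},\ldots,x^{(\kappa)}$ and $t$ variables directly from $D_{m_0} u$. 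One must exploit the drift $Y$ and chain the estimate along integral curves of $Y$ using the non-commutative group law \eqref{grouplaw}, which is precisely why the cylinders $\Q_+$ and $\Q_-$ in the statement must be placed in different time slices and at specific offsets dictated by the dilations $(\d_r)_{r>0}$ and the matrix $E(s)$.

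Finally, combining the Moser $L^\infty$ bound for sub-solutions with the weak $L^p$ Harnack for super-solutions on appropriately shifted cylinders produces the inequality \eqref{harnackAR}. The constants $C$, $\omega$, $R_0$ depend only on $Q$, $q$, $\lambda$, $\Lambda$ because assumption \textbf{(H3B)} enters only through Sobolev-type absorptions where the exponent is fixed; the coefficient norms $\|b\|_q$, $\|c\|_q$ can be rescaled away by the dilation invariance of $\K$ and the scaling \eqref{gdil}, so they do not appear in the structural constants.
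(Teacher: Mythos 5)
The paper does not actually prove Theorem \ref{harnack-thm}: it is quoted verbatim from \cite[Theorem~1.4]{AR-harnack} and used as a black box, so there is no in-paper proof to compare your attempt against. With that caveat, your outline is in the right family of techniques --- Caccioppoli energy estimate, hypoelliptic Sobolev embedding from the space $\W$, Moser iteration for local boundedness of sub-solutions, and a weak Harnack step for super-solutions --- and these are indeed the ingredients that \cite{AR-harnack}, \cite{GIMV}, \cite{GI} and \cite{WZ3} develop in this setting. One genuine structural choice to flag: you propose the Moser log-transform $\to$ BMO $\to$ John--Nirenberg route for the weak Harnack inequality. That is the approach of Guerand and Imbert \cite{GI}, whereas the original kinetic De Giorgi argument of Golse--Imbert--Mouhot--Vasseur \cite{GIMV} (and its extension to the general Kolmogorov setting that the present paper relies on) instead runs through a De Giorgi isoperimetric/intermediate-value lemma and an expansion-of-positivity step, with no BMO or John--Nirenberg. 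Both routes exist in the literature, but they are genuinely different mechanisms; your proposal should not present them as interchangeable, and the Poincar\'e inequality along integral curves of $Y$ that you identify as the main obstacle is exactly where the two approaches diverge and where a naive adaptation of the classical Moser scheme would fail.

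There is also a concrete error in the last paragraph of your proposal. You claim that the norms $\|b\|_q$ and $\|c\|_q$ ``can be rescaled away by the dilation invariance of $\K$.'' With $q > \tfrac34(Q+2) > \tfrac{Q+2}{2}$ these norms are \emph{subcritical} with respect to the scaling \eqref{gdil}--\eqref{gdil0}: under $\d_r$ the quantities $\| r\, b\circ\d_r\|_{L^q(\Q_1)}$ and $\| r^2\, c\circ\d_r\|_{L^q(\Q_1)}$ scale by a positive power of $r$, so the norms do not drop out on a fixed cylinder and cannot be eliminated by dilation. The reason the structural constants in \eqref{harnackAR} are stated to be independent of $\|b\|_q$ and $\|c\|_q$ is the sign conditions $\div b\ge 0$ and $c\le 0$ imposed in \textbf{(H3B)}: when one tests the equation against powers of $u$ times a cut-off and integrates by parts, the terms involving $\div b$ and $c$ acquire a favourable sign and can simply be dropped, rather than absorbed by H\"older and Sobolev. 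Only the residual term containing $\langle b, D\eta\rangle$ requires an absorption argument, and it is there (not in a rescaling) that the threshold $q>\tfrac34(Q+2)$ and the structure of the estimate must be exploited to avoid picking up a dependence on $\|b\|_q$.
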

\begin{remark}
	When considering the assumption \textbf{(H3A)} the constants appearing in the above statement do not depend on the $L^q$ norm of the coefficients of the operator $\L$,
	since we assume $|b(x,t)| \leq M$, $|c(x,t)| \leq M, \quad \forall (x,t) \in \R^{N+1}$. 
\end{remark}
We recall the following result, which will be useful in the proof of the upcoming Lemma \ref{lem-harnack}.
\begin{remark}\label{remarkLr}
Let $u$ be a weak solution to $\L u=0$ and $r>0$. Then $v:=u \circ \delta_\r$ solves equation $\L^{(r)}v=0$, where
\begin{eqnarray*}
\L^{(r)}v:=\div(A^{(r)} D u) - \div(a^{(r)}v) + \langle b^{(r)}, D v \rangle +  c^{(r)}v+\langle Bx, Dv \rangle -\p_t v
\end{eqnarray*}
with $A^{(r)}=A \circ \delta_r$, $a^{(r)}=r(a \circ \delta_r)$, $b^{(r)}=r(b \circ \delta_r)$ and $c^{(r)}=r^2(c \circ \delta_r)$.
\medskip\\
Moreover, if $u$ is a solution to $\L u=0$, then, for any $\zeta \in \R^{N+1}$, $v:=u \circ \ell_z$ solves equation $(\L\circ \ell_z)v=0$, where $\L\circ \ell_z$ is the operator obtained by $\L$ via a $\ell_z$-translation of the coefficients.
\end{remark}

%\subsection{Proof of Theorem \ref{gauss-bound-K}}
%In this subsection we consider operators of the form \eqref{Ltilde} satisfying the less restrictive assumptions \textbf{(H1)}-\textbf{(H3-b)}.
For $\beta, r, R >0$ and $z_0 \in \R^{N+1}$, we define the cones
\begin{eqnarray*}
P_{\beta, r, R}:=\lbrace z \in \R^{N+1}:z=\delta_\r(\xi,\beta), |\xi|<r,0 < \r \leq R \rbrace,
\end{eqnarray*}
and we set $P_{\beta, r, R}(z_0):=z_0 \circ P_{\beta, r, R}$. We are now in a position to derive the following Lemma, which is a consequence of Theorem \ref{harnack-thm}.
\begin{lemma}\label{lem-harnack}
Let $z \in \R^{N+1}$ and $R \in (0,1]$. Moreover, let $u$ be a continuous and non-negative weak solution to $\L u =0$ in $\Q_R(z)$ under the assumptions \textbf{(H1)}-\textbf{(H3B)}. Then we have
\begin{equation*}
\sup_{P_{1,\omega,R/R_0}(z)}\leq C u(z),
\end{equation*}
where $C$, $R_0$ and $\omega$ are the constants appearing in Theorem \ref{harnack-thm} and they only depend on $Q$, $\lambda$, $\Lambda$ and $q$.
\end{lemma}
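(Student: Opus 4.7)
The plan is to combine the translation and dilation invariance of $\L$ (Remark \ref{remarkLr}) with the Harnack inequality (Theorem \ref{harnack-thm}), reducing the bound at each point of the cone to a single Harnack application on a rescaled unit cylinder. This is the standard Kolmogorov-type scaling argument: translate the base point to the origin, then dilate so that the point of interest in the cone sits in the comparison sub-cylinder of Theorem \ref{harnack-thm}, and Harnack gives the desired control.

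First, by left-translation invariance we replace $u$ with $u \circ \ell_z$ and reduce to $z = 0$; by Remark \ref{remarkLr}, $u \circ \ell_z$ is a nonnegative weak solution of a translated operator of the same form on $\Q_R(0)$. For any $w \in P_{1,\omega,R/R_0}(0)$, write $w = \delta_\rho(\xi, 1)$ with $|\xi| < \omega$ and $\rho \in (0, R/R_0]$, and set $v(y, s) := u(\delta_\rho(y, s))$. By Remark \ref{remarkLr}, $v$ is a nonnegative continuous weak solution of a rescaled operator $\L^{(\rho)}$ on $\Q_{R/\rho}(0)$; since $\rho \leq R/R_0$, one has $\Q_{R_0}(0) \subseteq \Q_{R/\rho}(0)$, so $v$ satisfies the hypotheses of Theorem \ref{harnack-thm} on $\Q^0 = \Q_{R_0}(0)$.

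Applying Theorem \ref{harnack-thm} to $v$ yields $\sup_{\Q_-} v \leq C \inf_{\Q_+} v$. The cone parameter $\omega$ is chosen to match exactly the Harnack parameter in the definition of $\Q_\pm$, so by construction the image of $w$ under the rescaling lies in $\Q_-$ while $(0,0)$ lies in $\Q_+$. Consequently
\[
u(w) \,=\, v(\xi, 1) \,\leq\, \sup_{\Q_-} v \,\leq\, C \inf_{\Q_+} v \,\leq\, C\, v(0,0) \,=\, C\, u(0) \,=\, C\, u(z),
\]
and taking the supremum over $w \in P_{1,\omega,R/R_0}(0)$ proves the claim.

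The main obstacle is the uniformity of the Harnack constant $C$ in the scaling parameter $\rho$: the rescaled lower-order coefficients $b^{(\rho)} = \rho(b \circ \delta_\rho)$ and $c^{(\rho)} = \rho^2(c \circ \delta_\rho)$ appear in $\L^{(\rho)}$, and one needs their $L^q$-norms on $\Q_{R_0}(0)$ to be controlled independently of $\rho$ in order for the constants of Theorem \ref{harnack-thm} to remain uniform. A change of variables with Jacobian $\rho^{Q+2}$ gives an estimate of the form $\|b^{(\rho)}\|_{L^q(\Q_{R_0})} \lesssim \rho^{1 - (Q+2)/q}\|b\|_{L^q(\Q_{\rho R_0})}$, which is uniformly bounded for $\rho \in (0,1]$ precisely because of the scale-critical lower bound on $q$ imposed in \textbf{(H3B)}; under the stronger \textbf{(H3A)} the uniformity is automatic since the coefficients are bounded. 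Thus the constants $C, \omega, R_0$ depend only on $Q, \lambda, \Lambda, q$, exactly as asserted, and the proof closes.
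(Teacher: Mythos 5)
Your core argument is the same as the paper's: set $u_{z,\rho}:=u\circ\ell_z\circ\delta_\rho$ (you split this into a translation followed by a dilation, the paper does it in one step), observe that $\rho\le R/R_0$ ensures $\Q_{R_0}(0,0)\subset\Q_{R/\rho}(0,0)$, and apply Theorem~\ref{harnack-thm} to the rescaled solution; you even correctly restrict $\rho\in(0,R/R_0]$, where the paper's proof opens with ``$w\in P_{1,\omega,R}(z)$'', which is a typo for $P_{1,\omega,R/R_0}(z)$.

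Your supplementary ``obstacle'' paragraph, however, mis-diagnoses why the Harnack constant is $\rho$-uniform. The scaling identity $\|b^{(\rho)}\|_{L^q(\Q_{R_0})}=\rho^{1-(Q+2)/q}\|b\|_{L^q(\Q_{\rho R_0})}$ is correct, but the exponent $1-(Q+2)/q$ is \emph{negative} whenever $q<Q+2$, and \textbf{(H3B)} only requires $q>\tfrac34(Q+2)$; so in the range $\tfrac34(Q+2)<q<Q+2$ this factor is not uniformly bounded as $\rho\to0$, and the computation does not by itself deliver the claimed uniformity for $b$. (It does for $c$, whose exponent $2-(Q+2)/q$ is nonnegative once $q\ge\tfrac{Q+2}{2}$.) The uniformity actually comes from the way Theorem~\ref{harnack-thm} is stated: $C$, $\omega$, $R_0$ are asserted to depend only on $Q$, $q$, $\lambda$, $\Lambda$ and not on $\|b\|_q$ or $\|c\|_q$, so applying it to each $\L^{(\rho)}$ yields the same constant for every $\rho$ with no further scaling analysis required. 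Your proof of the lemma is thus sound, but the justification offered in the final paragraph would not close the gap you yourself raised.
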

\begin{proof}
Let $w \in P_{1,\omega,R}(z)$, i.e. $w =z \circ \delta_\r(\xi, 1)$ for some $\r \in (0,R]$ and $|\xi|<\omega$. We now define the function $u_{z,\r}:=u \circ \ell_z \circ \delta_\r$, which is a continuous and non-negative solution to $\L^{(\r)}u_{z,\r}=0$ in $\Q_{R_0}(0,0) \subset \Q_{R/\r}(0,0)$ in virtue of Remark \ref{remarkLr}. Thus, we can apply the Harnack inequality \eqref{harnackAR} and infer
\begin{eqnarray*}
u(w)=u_{z,\r}(\xi,1)\leq \sup_{\Q_-}u_{z,\r}\leq C \inf_{\Q_+}u_{z,\r} \leq C u_{z,\r}(0,0)=C u(z).
\end{eqnarray*}
\end{proof}

We next state a global version of the Harnack inequality, which is a crucial step in proving the Gaussian lower bound (see Theorem \ref{lowerbound} below).
\begin{theorem}[Global Harnack inequality]\label{globalharnack}
Let $t_0 \in \R$ and $\tau \in (0,1]$. If $u$ is a continuous and non-negative weak solution to $\L u = 0$ in 
	$\R^{N+1}\times (\tau-t_0,\tau+t_0)$ under the assumptions \textbf{(H1)}-\textbf{(H3B)}, then we have
	\begin{eqnarray*}
	u(\xi,t) \leq c_0 e^{c_0 \langle C^{-1}(t-t_0)\left( \xi-e^{(t-t_0)B}x\right),\xi-e^{(t-t_0)B}x\rangle}
	u(x,t_0),\quad t \in (t_0,\tau+t_0),\quad x,\xi \in \R^{N},
	\end{eqnarray*}
where $C$ is the matrix introduced in \eqref{defC} and $c_0$ is a positive constant only depending on $Q$, $\lambda$, $\Lambda$ and $q$.
\end{theorem}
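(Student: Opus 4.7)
By the left-translation invariance of $\L$ (see Remark \ref{remarkLr}), applied to the translation that sends $(x,t_0)$ to $(0,0)$, we may reduce to the case $(x,t_0)=(0,0)$; the inequality to be established then reads
\begin{equation*}
u(\xi,t) \leq c_0\, \exp\!\bigl(c_0 \langle C^{-1}(t)\xi,\xi\rangle\bigr)\, u(0,0), \qquad t\in(0,\tau),\ \xi\in\R^{N}.
\end{equation*}
The overall strategy is to iterate the cone Harnack inequality of Lemma \ref{lem-harnack} along a finite chain of points joining $(0,0)$ to $(\xi,t)$, and to count the number of steps in the chain by means of the value function of the optimal control problem associated with the principal part operator $\K$.

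The first step is to construct, for every target $(\xi,t)$ with $t\in(0,\tau)$, a finite sequence $z_{0}=(0,0),z_{1},\ldots,z_{n}=(\xi,t)$ with the property that $z_{k+1}\in P_{1,\omega,\rho_{k}}(z_{k})$ for suitable radii $\rho_{k}\in(0,1]$, where $\omega$ and $R_{0}$ are the constants of Theorem \ref{harnack-thm}. Each cone $P_{1,\omega,\rho_{k}}(z_{k})$ is contained in a cylinder $\Q_{R_{0}\rho_{k}}(z_{k})$ on which $\L u=0$, so that $n$ successive applications of Lemma \ref{lem-harnack} yield
\begin{equation*}
u(\xi,t)\ \le\ C^{n}\, u(0,0)\ =\ \exp(n\log C)\, u(0,0).
\end{equation*}
Thus a Gaussian bound will follow as soon as we can produce a chain with $n\lesssim \langle C^{-1}(t)\xi,\xi\rangle+1$, the additive constant being harmless because $t\in(0,\tau]\subset(0,1]$.

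The second step is to produce such a chain by tracking an almost-optimal trajectory of the control system
\begin{equation*}
\dot\gamma(s)\ =\ \sum_{i=1}^{m_{0}}\omega_{i}(s)\,e_{i}\ -\ B\gamma(s),\qquad \gamma(0)=0,\ \gamma(t)=\xi,
\end{equation*}
whose time-$t$ reachability cost is exactly $\tfrac{1}{4}\langle C^{-1}(t)\xi,\xi\rangle$, in view of the explicit representation of $\GK$ in \eqref{Glambda}--\eqref{eq-Gamma0-b}. Picking a control $\omega^{*}$ whose energy $\int_{0}^{t}|\omega^{*}(s)|^{2}ds$ is within a constant factor of the infimum, one partitions $[0,t]$ into intervals $[s_{k},s_{k+1}]$ and defines $z_{k}=(\gamma^{*}(s_{k}),s_{k})$. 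The radii $\rho_{k}$ are chosen so that each displacement $z_{k}^{-1}\circ z_{k+1}$ lies in the cone $P_{1,\omega,\rho_{k}}$; by the anisotropic scaling $\delta_{r}$ of $\K$ and the block form \eqref{B} of $B$, the correct choice is $\rho_{k}\asymp(s_{k+1}-s_{k})^{1/2}+|\omega^{*}(s_{k})|(s_{k+1}-s_{k})$.

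The main obstacle is the final counting estimate: one must verify that the partition can be chosen so that the number $n$ of steps is dominated by the energy of $\omega^{*}$ plus a $t$-dependent additive constant, so that after exponentiation we recover exactly the form $\langle C^{-1}(t)\xi,\xi\rangle$. Two ingredients enter here: (i) the classical identity computing the value function of the sub-Riemannian optimal-control problem for $\K$ in terms of the matrix $C(t)$ of \eqref{defC}, which transfers the count into the desired quadratic form; (ii) the anisotropic Hörmander scaling, which is used to check that displacements in the lower blocks $x^{(j)}$, $j\ge 1$, generated by iterated brackets of $Y$ with $\partial_{x^{(0)}}$, indeed fit inside the cones $P_{1,\omega,\rho_{k}}$. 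Absorbing the resulting constants into $c_{0}$, which depends only on $Q,\lambda,\Lambda,q$, completes the proof.
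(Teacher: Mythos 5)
Your proposal correctly captures the Harnack-chain-plus-optimal-control strategy that the paper itself relies on: it constructs a chain of cones via Lemma \ref{lem-harnack}, tracks an almost-optimal controlled trajectory whose cost is $\langle C^{-1}(t)\xi,\xi\rangle$, and converts the step count into the Gaussian exponent. The paper does not write this argument out but instead cites \cite[Theorem 3.6]{LPP}, noting that the only change is to replace the local Harnack inequality and cone lemma of \cite{LPP} with Theorem \ref{harnack-thm} and Lemma \ref{lem-harnack}, so your plan is essentially the proof the paper intends.
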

The proof of Theorem \ref{globalharnack} is based on a classical argument that makes use of the so-called Harnack chains, alongside with control theory. Moreover, the proof of this theorem follows the one of \cite[Theorem 3.6]{LPP}, with the only difference that we here apply Theorem \ref{harnack-thm} and Lemma \ref{lem-harnack} instead of Theorem 3.1 and Lemma 3.5 of \cite{LPP}. Indeed, the method we rely on has the advantage of highlighting the geometric structure of  the operator $\L$ and can be therefore automatically extended to more general operators. For this reason, we here do not show the derivation of Theorem \ref{globalharnack} and we refer the reader to \cite{LPP} for the details.

In proving our main result Theorem \ref{lowerbound}, we will also make use of the following result, which provides an upper bound for the fundamental solution. Les us remark that in the upcoming Theorem we consider operators of the form \eqref{Ltilde} satisfying the less restrictive assumptions \textbf{(H1)}-\textbf{(H3B)}.
\begin{theorem}[Gaussian upper bound]\label{upperbound}
Let $\tilde{\L}$ be an operator of the form \eqref{Ltilde} satisfying assumptions \textbf{(H1)}-\textbf{(H3B)}. Then there exists a positive constant $c_1$, only dependent on $Q$, $\lambda$, $\Lambda$ and $q$, such that
\begin{equation}\label{upperbound}
\Gamma(x,t;y,t_0)\leq \frac{c_1}{\left(t-t_0\right)^{\frac{Q}{2}}}\exp \left( -\frac{1}{c_1}\left| \delta^0_{(t-t_0)^{-\frac{1}{2}}}\left( y-e^{(t-t_0)B}x\right)\right|^2\right)
\end{equation}
for any $0 < t-t_0 \leq 1$ and $x, y \in \R^{N}$.
\end{theorem}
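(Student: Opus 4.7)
The plan is to adapt the Aronson-type argument of \cite[Section 4]{LPP} and \cite{PP-upper} to the case of $L^{q}$ coefficients satisfying \textbf{(H3B)}. First I would exploit the group structure to reduce to the canonical configuration $(y,t_0)=(0,0)$ and $t=1$: by the left-invariance of $\K$ under $\lL$-translations and the dilation covariance recalled in Remark \ref{remarkLr}, the estimate at a general point $(x,t;y,t_0)$ with $0<t-t_0\le 1$ follows from the estimate at the origin of the group. Then I would regularize the coefficients $a_{ij}, a_i, b_i, c$ by convolution, producing operators $\tilde\L_\e$ that fulfill \textbf{(C)} and whose classical fundamental solutions $\G_\e$ exist by Theorem \ref{ex-prop}; the goal is a Gaussian upper bound on $\G_\e$ uniform in $\e$, from which \eqref{upperbound} will follow by a compactness/stability argument as $\e\to 0$.

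The core step is a Davies exponential perturbation. For each $\mu\in\R^{N}$, I would pick a weight $\phi_\mu(x,t)$, affine in an appropriate group-translated coordinate, designed so that $Y\phi_\mu$ produces an explicit quadratic-in-$\mu$ correction compatible with the $L^{2}$ energy. Setting $u_\e:=e^{\phi_\mu}\G_\e(\cdot,\cdot;0,0)$ and testing the equation satisfied by $u_\e$ against itself, the energy identity should take the schematic form
\[
\sup_{t\in(0,1]}\|u_\e(\cdot,t)\|_{L^{2}}^{2}+\int_{0}^{1}\|D_{m_0}u_\e(\cdot,t)\|_{L^{2}}^{2}\,dt \,\le\, c_1\exp\!\bigl(c_1\langle C^{-1}(1)\mu,\mu\rangle\bigr),
\]
where the lower-order contributions from $a$, $b$, $c$ are absorbed using $\div a,\div b\ge 0$, $c\le 0$ and $L^{q}$ integrability with $q>\tfrac34(Q+2)$, exactly as in the Caccioppoli estimates underlying Theorem \ref{harnack-thm} and developed in \cite{AR-harnack}. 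A Moser iteration in the Kolmogorov geometry, again following \cite{AR-harnack}, then upgrades this to the pointwise bound $u_\e(x,1)\le c_1\exp(c_1\langle C^{-1}(1)\mu,\mu\rangle)$, so that undoing the weight yields
\[
\G_\e(x,1;0,0)\le c_1\exp\!\bigl(c_1\langle C^{-1}(1)\mu,\mu\rangle-\phi_\mu(x,1)\bigr).
\]
Optimizing over $\mu\in\R^{N}$, which is a finite-dimensional quadratic minimization, produces the Gaussian factor $\exp(-c_1^{-1}|\delta^{0}_{1}x|^{2})$ by using that $\langle C^{-1}(1)\cdot,\cdot\rangle^{1/2}$ is equivalent to the homogeneous norm $|\delta^{0}_{1}\cdot|$, a consequence of \eqref{defC} and \eqref{Cpositive}. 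Reverting the reduction of the first step and letting $\e\to 0$ concludes the proof.

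The hardest point will be the design of the weight $\phi_\mu$: the drift $Y$ couples the $m_0$ diffusive coordinates with the $N-m_0$ nondiffusive ones, so the naive parabolic choice $\phi_\mu=\langle\mu,x^{(0)}\rangle$ does not make $Y\phi_\mu$ controllable by the energy. The correct weight must be built from $e^{(t-t_0)B^{T}}\mu$ acting on the first $m_0$ components of the group element, mirroring the Gaussian kernel in \eqref{eq-Gamma0-b}, so that the covariance matrix $C(t)$ emerges naturally in the estimate. A secondary technical difficulty is controlling the unbounded lower-order terms in both the energy inequality and the Moser iteration; this is precisely where the sign conditions in \textbf{(H3B)} together with the integrability threshold $q>\tfrac34(Q+2)$ are required, to prevent the exponential weight from amplifying uncontrollable contributions.
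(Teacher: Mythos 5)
Your proposal follows the same broad Aronson-type strategy as the paper but with a genuinely different parametrization of the exponential weight, so it is worth comparing the two. The paper's proof is a direct adaptation of \cite[Theorem 4.1]{LPP} and \cite[Theorem 3.3]{ALP}: it uses the $\mu$-free Gaussian weight $h(x,t)=-|x|^2/\delta+\alpha(\eta-t)$ and shows directly that $3\langle AD_{m_0}h,D_{m_0}h\rangle-Yh-\tr B+\langle a,D_{m_0}h\rangle+\langle b,D_{m_0}h\rangle\le 0$ after a suitable choice of $\alpha$ and $k$, then plugs this into the weighted energy estimate; the $L^q$ lower-order terms are absorbed by replacing the Caccioppoli and Sobolev inequalities of \cite{ALP} with those of \cite{AR-harnack}, which is exactly the ingredient you also identify. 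You instead propose a Davies-style perturbation with an affine weight $\phi_\mu$ built from $e^{(t-t_0)B^T}\mu$ and a final optimization over $\mu\in\R^N$. Both are standard routes to Gaussian bounds and yours correctly locates the two technical crux points — designing $\phi_\mu$ so that $Y\phi_\mu$ is tame and using the sign/$L^q$ assumptions to absorb $a,b,c$ — but it buys nothing over the paper's choice here and introduces one extra bookkeeping burden: the $L^2\to L^\infty$ Moser step must be carried out for the $\mu$-conjugated operator $e^{\phi_\mu}\tilde\L e^{-\phi_\mu}$, whose lower-order coefficients grow linearly in $\mu$, so the constants in the Caccioppoli/Sobolev/Moser chain must be tracked in $\mu$ before the optimization; the Aronson weight sidesteps this because $h$ is chosen so the perturbed differential inequality closes by itself. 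Finally, your schematic energy bound has a sign slip: the cost of the exponential weight should read $\exp\left(c_1\langle C(1)\mu,\mu\rangle\right)$ (the covariance, not its inverse), since $\int_0^1|(e^{sB^T}\mu)^{(0)}|^2\,ds\sim\langle C(1)\mu,\mu\rangle$; the inverse $C^{-1}$ then appears correctly only after the Legendre transform over $\mu$.
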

\begin{proof}
The Gaussian upper bound \eqref{upperbound} was proved in \cite[Theorem 4.1]{LPP} under the stronger assumption that the coefficients $a_i$, $b_i$, with $i=1,\ldots,m_0$, and $c$ are bounded measurable functions of $(x,t)$ and in \cite[Theorem 1.4]{ALP} under the additional hypothesis that the coefficients $b_i$, with $i=1,\ldots,m_0$, are null. The more general case where the first order coefficients satisfy assumption \textbf{(H3B)} can be treated similarly. 
Thus, here we just sketch the few adjustments required to adapt the proof of \cite{LPP} to the present case.

The proof relies on the combination of a Caccioppoli type inequality and a Sobolev type inequality. In order to handle the more general case, we need to replace the Caccioppoli and the Sobolev inequality contained in \cite{ALP} (Theorem 2.3 and 2.5, respectively) with the ones given in \cite{AR-harnack}. More precisely, we consider the Caccioppoli inequality \cite[Theorem 3.4]{AR-harnack} and we focus on the new term involving the coefficient  $a \in \left( L^q_{\loc} (\O) \right)^{m_0}$, which is handled as follows
\begin{align*}
&\frac{(2p-1)}{2} \, \int_{\Q_r} \langle  a, D_{m_0}  v^2 \rangle \psi^2 \,  + 2 p \, \int_{\Q_r} \langle a v^2, D_{m_0} \psi  \rangle \psi	\\ \nonumber		
			&=-\frac{2p-1}{2}\int_{\Q_r} \div \cdot a \, v^2 \psi^2-(2p-1)\int_{\Q_r}\langle a, D_{m_0}\psi \rangle \psi v^2+ 2 p \, \int_{\Q_r} \langle a v^2, D_{m_0} \psi  \rangle \psi \\ \nonumber 
			&\leq|2p-1| \, \int_{\Q_r} |a| |\psi| |D_{m_0}  \psi| v^2
			+ 2 |p| \, \int_{\Q_r} |a| |D_{m_0} \psi| |\psi|  v^2 \\ \nonumber 
		%&\qquad \qquad \le 
		%|2p-1| \parallel a \parallel_{L^q(\Q_r)} \, \parallel v \parallel_{L^{2\b_1}(\Q_r)}
		%\parallel D_{m_0} v \parallel_{L^2(\Q_r)} \, + \, 
		%\frac{|2p| \, c_1}{(r - \r)} \, \parallel a \parallel_{L^{q}(\Q_r)} \, \parallel v \parallel_{L^{2\b_2}(\Q_r)}^2 \\ \nonumber
		& \le
		\frac{C|2p-1|}{(r-\r)} \parallel a \parallel_{L^q(\Q_r)} \, \parallel v \parallel_{L^{2\b}(\Q_r)}^2 +
		 \frac{|2p| \, c_1}{(r - \r)}  \, \parallel a \parallel_{L^{q}(\Q_r)} \, \parallel v \parallel_{L^{2\b}(\Q_r)}^2,
	\end{align*}
where $\b=\frac{q}{q-1}$, and $q$ is the integrability exponent introduced in \textbf{(H3B)}.
From this point, we obtain the Caccioppoli inequality reasoning as in the proof of \cite[Theorem 3.4]{AR-harnack}.

As far as the Sobolev inequality is concerned, we find two extra terms in the representation formula of sub-solutions. More precisely, following the notation of \cite[Theorem 3.3]{AR-harnack}, the term $I_0(z)$ here becomes
\begin{equation*}
I_0 (z) = \int_{\Q_r} \left[ -\langle a, D ( \psi \G(z, \cdot)) \rangle  v 
		\right](\z) d \z \hspace{1mm}
		+ \int_{\Q_r} \left[ \langle b, D v
		\rangle \G (z, \cdot) \psi \right](\z)d\z \hspace{1mm} + \,\int_{\Q_r} \left[ c v \G (z, \cdot) \psi \right](\z) d \z.
\end{equation*}
Since
\begin{equation*}
		\langle a, D v \rangle \,  \in L^{2 \frac{q}{q+2}} \hspace{4mm} 
		\text{for } a  \in L^q, \hspace{2mm} q > \frac{Q+2}{2} \hspace{2mm}
		\text{and } v \in L^2,
	\end{equation*}
reasoning as in \cite[Theorem 3.3]{AR-harnack} we infer
\begin{align*}
%		\parallel I_0 (\z) \parallel_{L^{2\a}(\Q_\r)} &\le 
%		\text{meas}(\Q_\r)^{2/Q} \parallel I_0 (\z) 
%		\parallel_{L^{2^{**}}(\Q_\r)} \text{meas}(\Q_\r)^{2/Q}\\
	\parallel I_0 (\z) \parallel_{L^{2\a}(\Q_\r)} 	&\leq 
		\parallel \G * \left( \langle a,  D_{m_0}v \rangle \psi \right) + \G * \left( \langle b,  D_{m_0}v \rangle \psi \right)  + \G * \left( c v \psi \right) 
		\parallel_{L^{2 \alpha}(\Q_\r)}\\
		&\le C \cdot \left( \parallel a \parallel_{L^q (\Q_\r)} + \parallel b \parallel_{L^q (\Q_\r)}  \parallel D_{m_0}v \parallel_{L^2 (\Q_\r)}
		+  \parallel c \parallel_{L^q (\Q_\r)} \parallel v \parallel_{L^2 (\Q_\r)})\right),
	\end{align*}
where 
\begin{equation*}\label{def-alpha}
	\alpha=\frac{q(Q+2)}{q(Q-2)+2(Q+2)}.
\end{equation*}
In addition, the term $I_3(z)$ here becomes
\begin{align*}
I_3 (z) &= \int_{\Q_r} \left[ \langle A Dv, D( \G (z, \cdot ) \psi)
		\rangle \right](\z) d \z \hspace{1mm} - \hspace{1mm}
		\int_{\Q_r} \left[ \left( \G (z, \cdot ) \psi \right) Yv
		\right](\z) d \z \hspace{1mm}  \\
		&\hspace{4mm} + \, \int_{\Q_r} \left[ \langle a, D (\G (z, \cdot) \psi ) 
		\rangle v \right](\z) d \z  \, - 
		 \int_{\Q_r} \left[ \langle b, D v
		\rangle \G (z, \cdot) \psi \right](\z) d \z - \, \int_{\Q_r} \left[ c v \G (z, \cdot) \psi \right](\z) d \z 
\end{align*}
and can be treated exactly as the analogous one in \cite{AR-harnack}.
The rest of the proof of the Sobolev inequality follows the one contained in \cite[Theorem 3.3]{AR-harnack}.

Lastly, when considering our case, the proof of inequality (3.4) in \cite[Theorem 3.3]{ALP} needs to be treated slightly differently. 
In particular, inequality (3.2) in \cite{ALP} becomes 
\begin{align*}
&\int_{\R^N}u^2\gamma_R^2e^{2h}u^2 \Big|_{t=\tau}dx-2\int\int_{[\tau,\eta]\times{\R^N}}e^{2h}u^2\gamma_R^2 (3 \langle AD_{m_0}h,D_{m_0} h \rangle-Yh-\\
&\quad\quad\qquad\qquad\quad\qquad-\tr B +\langle a, D_{m_0} h \rangle + \langle b, D_{m_0} h \rangle )dxdt 
\\ &\leq \int_{\R^N}u^2\gamma_R^2e^{2h}u^2 \Big|_{t=\eta}dx +2\int\int_{[\tau,\eta]\times{\R^N}}e^{2h}u^2(3\Lambda|D_{m_0}\gamma_R|+|Y\gamma_R|^2+\\&\quad\quad\qquad\qquad\qquad\quad - \tr B +\langle a, D_{m_0}\gamma_R \rangle \gamma_R+\langle b, D_{m_0}\gamma_R \rangle \gamma_R)dxdt.
\end{align*}
Thus, inequality (3.4) in \cite{ALP} can be rewritten as
\begin{align}\label{inequalityLP}
3 \langle AD_{m_0}h,D_{m_0} h \rangle-Yh-\tr B +\langle a, D_{m_0} h \rangle + \langle b, D_{m_0} h \rangle\leq 0, \qquad \eta -\frac{\eta-s}{k} \leq t \leq \eta,x \in \R^N,
\end{align}
where $h(x,t)=-\frac{|x|^2}{\delta}+\alpha(\eta-t)$, with $\delta=2(\eta-s)-k(\eta-t)$. Hence, inequality \eqref{inequalityLP} becomes
\begin{align*}
&3 \langle AD_{m_0}h,D_{m_0} h \rangle-Yh-\tr B +\langle a, D_{m_0} h \rangle + \langle b, D_{m_0} h \rangle\\ 
&\quad\leq \frac{12\Lambda |x|^2}{\delta^2}+\frac{2\Vert B\Vert |x|^2}{\delta}-\frac{k|x|^2}{\delta^2}-\alpha-\tr B+\frac{2}{\delta}\left(\Vert a \Vert_2^2+\Vert b \Vert_2^2 \right)+\frac{4|x|^2}{\delta}\\
&\quad \leq \frac{|x|^2}{\delta^2}\left(12\Lambda+2\delta\Vert B\Vert-k+4\delta\right)-\alpha-\tr B+\frac{2}{\delta}\left(\Vert a \Vert_2^2+\Vert b \Vert_2^2 \right)\\
&\quad \leq \frac{|x|^2}{\delta^2}\left(12\Lambda+4\Vert B\Vert-k+8\right)-\alpha-\tr B+\frac{2}{\delta}\left(\Vert a \Vert_2^2+\Vert b \Vert_2^2 \right)
\end{align*}
and therefore inequality \eqref{inequalityLP} holds true provided that we choose $\alpha=\frac{2}{\delta}\left(\Vert a \Vert_2^2+\Vert b \Vert_2^2 \right)-\tr B$ and $k$ big enough.
The rest of the proof follows the same lines of the one of \cite[Theorem 4.1]{LPP}.
\end{proof}
\begin{lemma}\label{boundDR}
Let $\L$ be an operator of the form \eqref{defL} satisfying assumptions \textbf{(H1)}-\textbf{(H3B)}. Then there exist two positive constants $R$ and $c_2$, only dependent on $Q$ and $B$, such that
\begin{equation}
\int_{\left|\delta_{(\sqrt{t-t_0})}^0\left( y-e^{(t-t_0)B}x\right)\right|\leq R}\Gamma(x,t;y,t_0)dx \geq c_2, \qquad 0<t-t_0\leq 1, y \in \R^N.
\end{equation}
\end{lemma}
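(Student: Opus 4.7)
The plan is to pair the Gaussian upper bound of Theorem \ref{upperbound}, which controls the mass of $\Gamma(\cdot,t;y,t_0)$ outside the set
\[
D_R:=\{x\in\R^N : |\delta^0_{\sqrt{t-t_0}}(y-e^{(t-t_0)B}x)|\le R\},
\]
with a universal lower bound for the total mass $\int_{\R^N}\Gamma(x,t;y,t_0)\,dx$; taking $R$ large enough will then leave a definite amount of mass inside $D_R$.

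Step 1 (lower bound on the total mass). Define $\bar v(y,t_0):=\int_{\R^N}\Gamma(x,t;y,t_0)\,dx$. By the representation formula \eqref{rep1} applied with $\phi\equiv 1$, $\bar v$ is a non-negative weak solution of the adjoint equation $\L^*\bar v=0$ on the strip $(T_0,t)\times\R^N$ with terminal datum $\bar v(\cdot,t)\equiv 1$. Since the canonical form \eqref{B} of $B$ gives $\tr B=0$, a direct computation from \eqref{Ladj} yields $\L^*(1)=-\div b+c-\tr B\le 0$ thanks to \textbf{(H3B)}, so the constant function $1$ is a weak super-solution of $\L^*w=0$. A weak comparison argument combined with the adjoint version of Theorem \ref{harnack-thm}, applied on the slab of length $t-t_0\le 1$, then provides $\bar v(y,t_0)\ge c_3>0$ with $c_3$ depending only on $Q$ and $B$.

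Step 2 (tail estimate). Apply Theorem \ref{upperbound} and change variables $\xi:=\delta^0_{(t-t_0)^{-1/2}}(y-e^{(t-t_0)B}x)$; the Jacobian equals $(t-t_0)^{Q/2}$, thanks to $\tr B=0$ and $|\det\delta^0_r|=r^Q$, and it cancels the prefactor $(t-t_0)^{-Q/2}$ appearing in the upper bound. Using also that $y-e^{(t-t_0)B}x=\delta^0_{\sqrt{t-t_0}}(\xi)$ and hence $|\delta^0_{\sqrt{t-t_0}}(y-e^{(t-t_0)B}x)|=|\delta^0_{t-t_0}(\xi)|$, we obtain
\[
\int_{\R^N\setminus D_R}\Gamma(x,t;y,t_0)\,dx\le c_1\int_{\{|\delta^0_{t-t_0}(\xi)|>R\}}e^{-|\xi|^2/c_1}\,d\xi.
\]
Since $t-t_0\le 1$ gives $|\delta^0_{t-t_0}(\xi)|\le|\xi|$, the integration region is contained in $\{|\xi|>R\}$, and the Gaussian tail is smaller than $c_3/2$ as soon as $R$ is chosen large in a way that depends only on $c_1$, hence only on $Q$ and $B$.

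Combining the two steps yields $\int_{D_R}\Gamma\,dx\ge c_3-c_3/2=c_3/2=:c_2$. The delicate point in this plan is Step 1: the super-solution property of the constant $1$ immediately gives the upper bound $\bar v\le 1$, but the reverse inequality is harder to obtain when $b$ and $c$ are only in $L^q_{\loc}$ rather than bounded. A natural route is to approximate $b$ and $c$ by bounded coefficients (for which a Gr\"onwall-type argument readily yields the lower bound) and to pass to the limit, invoking the stability of the weak fundamental solution.
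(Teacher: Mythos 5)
Your overall strategy matches the paper's: split the mass of $\Gamma(\cdot,t;y,t_0)$ into the part inside $D_R$ and the tail, control the tail via the Gaussian upper bound of Theorem \ref{upperbound}, and bound the total mass from below. Your Step~2 is essentially the paper's Step~2 and is done correctly (you track the dilation algebra $\delta^0_{\sqrt{t-t_0}}\circ\delta^0_{\sqrt{t-t_0}}=\delta^0_{t-t_0}$ and the Jacobian $(t-t_0)^{Q/2}$ carefully, and the inclusion $\{|\delta^0_{t-t_0}\xi|>R\}\subset\{|\xi|>R\}$ for $t-t_0\le 1$ is exactly what is needed).

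The genuine gap is Step~1, and it is also where you diverge from the paper. You correctly observe that, under \textbf{(H3B)} with $\tr B=0$, the constant $1$ is a super-solution of $\L^*w=0$, but that only yields $\bar v\le 1$. For the lower bound you invoke ``a weak comparison argument combined with the adjoint version of Theorem \ref{harnack-thm}'' and, as a fallback, an approximation of $b,c$ by bounded coefficients plus a Gr\"onwall estimate plus stability of the weak fundamental solution. None of this is carried out, and the two mechanisms you name do not straightforwardly give what you need: Theorem~\ref{harnack-thm} is stated only for $\L$ (no adjoint Harnack inequality appears in the paper), and a Harnack inequality by itself does not produce a strictly positive, universal lower bound for $\bar v$ on the whole slab; the stability of the weak fundamental solution under $L^q$-approximation of the first-order coefficients is itself a substantive claim that would need a separate argument.

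The paper's Step~1 is more direct and is the idea you are missing. Instead of trying to show $\bar v\ge c_3$ by Harnack or by limiting arguments, introduce the perturbed function
\begin{equation*}
v(y,t_0):=\int_{\R^N}\Gamma(x,t;y,t_0)\,dx - e^{-c_3(t-t_0)},
\end{equation*}
which has zero terminal datum at $t_0=t$. Because $\int\Gamma\,dx$ solves $\L^*w=0$ in $(y,t_0)$, one has $\L^*v=-\L^*\bigl(e^{-c_3(t-t_0)}\bigr)$, and the point is that the term $Y^*e^{-c_3(t-t_0)}=\partial_{t_0}e^{-c_3(t-t_0)}=c_3e^{-c_3(t-t_0)}$ shifts the sign so that, for $c_3$ chosen small, $v$ is a weak super-solution of the adjoint Cauchy problem. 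The maximum principle for $\L^*$ then gives $v\ge0$, i.e. $\int_{\R^N}\Gamma(x,t;y,t_0)\,dx\ge e^{-c_3(t-t_0)}\ge e^{-c_3}$ for $0<t-t_0\le1$, which is the lower bound on the total mass you were after. In short: replace the constant barrier $1$ by the exponential barrier $e^{-c_3(t-t_0)}$ in the backward time variable, which turns the comparison function into a sub-solution and makes the maximum principle applicable directly, with no need for Harnack chains or approximation of the coefficients.
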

\begin{proof}
We first notice that for a small enough constant $c_3$, which depends only on $Q$ and $B$, the function
\begin{equation*}
v(y,t_0):=\int_{\R^N} \Gamma (x,t;y,t_0) - e^{-c_3(t-t_0)}, \quad t>t_0,\quad y \in \R^N,
\end{equation*}
is a weak super-solution of the Cauchy problem
\begin{equation*}
			\begin{cases}
				\L^* v(y,t_0) = - e^{-c(t-t_0)}(c-\tr(B)+c_3)\leq 0, \qquad &t>t_0,\quad y \in \R^N , \\
v(y,t)=0, \qquad  &y \in \R^{N},
			\end{cases}
		\end{equation*}
where $\L^*$ is the adjoint operator defined in \eqref{Ladj}. Hence, in virtue of the maximum principle we infer $v \geq 0$, that is
\begin{equation}\label{intGamma}
\int_{\R^N} \Gamma (x,t;y,t_0) \geq e^{-c_3(t-t_0)}, \quad t>t_0,\quad y \in \R^N.
\end{equation}
We now observe that
\begin{align}\label{ineGamma2} \nonumber
&\int_{\left|\delta_{(\sqrt{t-t_0})}^0\left( y-e^{(t-t_0)B}x\right)\right|\geq R}\Gamma(x,t;y,t_0)dx \\&\quad\leq \frac{c_1}{\left(t-t_0\right)^{\frac{Q}{2}}}\int_{\left|\delta_{(\sqrt{t-t_0})}^0\left( y-e^{(t-t_0)B}x\right)\right|\geq R}\exp \left( -\frac{1}{c_1}\left| \delta^0_{(t-t_0)^{-\frac{1}{2}}}\left( y-e^{(t-t_0)B}x\right)\right|^2\right)dx\\ \nonumber
&\quad=c_1\int_{|z|\geq R}\exp\left(-\frac{1}{c_1}|z|^2 \right) dz,
\end{align}
where in the second line we have used the upper bound \eqref{upperbound} and in the third line we have performed the change of variables $z=\delta_{(\sqrt{t-t_0})}^0\left( y-e^{(t-t_0)B}x\right)$. Combining \eqref{intGamma} and \eqref{ineGamma2} and choosing $c_3$ small enough we obtain the thesis.
\end{proof}
We are now in a position to state and prove the following result concerning the Gaussian lower bound of the fundamental solution.
\begin{theorem}[Gaussian lower bound]\label{lowerbound}
Let $\L$ be an operator of the form \eqref{defL} satisfying assumptions \textbf{(H1)}-\textbf{(H3B)}. Then there exists a positive constant $c_4$, only dependent on $Q$, $\lambda$, $\Lambda$ and $q$, such that
\begin{equation}\label{lowerboundform}
\Gamma(x,t;y,t_0)\geq \frac{c_4}{\left(t-t_0\right)^{\frac{Q}{2}}}e^{-c_4 \langle C^{-1}(t-t_0)\left( y-e^{(t-t_0)B}x\right),y-e^{(t-t_0)B}x\rangle}
\end{equation}
for any $0 < t-t_0 \leq 1$ and $x, y \in \R^{N}$.
\end{theorem}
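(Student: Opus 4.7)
The plan is to follow the blueprint of the proof of \cite[Theorem 1.3]{LPP}: combine the integral--type non-degeneracy of $\Gamma$ at scale $\sqrt{t-t_0}$ furnished by Lemma \ref{boundDR}, the spatial propagation afforded by the global Harnack inequality Theorem \ref{globalharnack} (which, being obtained via Harnack chains, already packages the Gaussian decay), and an algebraic comparison between the Gaussian exponent arising from the Harnack step and the target exponent in \eqref{lowerboundform}.

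\textbf{On-diagonal bound and Harnack propagation.} Fix $(y,t_0)$, $(x,t)$ with $0<t-t_0\le 1$ and set the midpoint time $s:=t_0+(t-t_0)/2$. Applied to $\xi\mapsto \Gamma(\xi,s;y,t_0)$, Lemma \ref{boundDR} gives
\begin{equation*}
\int_{D}\Gamma(\xi,s;y,t_0)\,d\xi \;\ge\; c_2,\qquad D:=\Big\{\xi\in\R^{N}:\,|\delta^{0}_{1/\sqrt{s-t_0}}(y-e^{(s-t_0)B}\xi)|\le R\Big\}.
\end{equation*}
A change of variables, together with $\sum_{j}(2j+1)m_j=Q$ and $|\det e^{sB}|=1$, produces $|D|\asymp (s-t_0)^{Q/2}\asymp (t-t_0)^{Q/2}$, so by continuity of $\Gamma$ and the mean value inequality some $\xi^{*}\in D$ satisfies
\begin{equation*}
\Gamma(\xi^{*},s;y,t_0)\;\ge\;\frac{c}{(t-t_0)^{Q/2}}.
\end{equation*}
Since $u(\xi,\tau):=\Gamma(\xi,\tau;y,t_0)$ is a non-negative weak solution of $\L u=0$ on $\R^{N}\times(t_0,+\infty)$, applying Theorem \ref{globalharnack} between the earlier point $(\xi^{*},s)$ and the later point $(x,t)$ (in the direction which, rearranged, bounds the later-time value from below by the earlier-time value --- the standard outcome of a parabolic Harnack chain) provides
\begin{equation*}
\Gamma(x,t;y,t_0)\;\ge\; c_0^{-1}e^{-c_0\Psi}\,\Gamma(\xi^{*},s;y,t_0)\;\ge\; \frac{c'}{(t-t_0)^{Q/2}}e^{-c_0\Psi},
\end{equation*}
where $\Psi$ is the quadratic form furnished by Theorem \ref{globalharnack}, built from $C^{-1}(t-s)$ and a group-theoretic difference between the two endpoints.

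\textbf{Gaussian comparison (the main obstacle).} What remains is the estimate
\begin{equation*}
\Psi \;\le\; C_1\,\Phi + C_2,\qquad \Phi := \langle C^{-1}(t-t_0)(y-e^{(t-t_0)B}x),\,y-e^{(t-t_0)B}x\rangle,
\end{equation*}
uniform in $\xi^{*}\in D$, with $C_1,C_2$ depending only on $B$ and $R$. This is the main technical difficulty and rests on three ingredients: (i) $\xi^{*}$ differs from $e^{-(s-t_0)B}y$ by a vector of $\delta^{0}$-length at most $R\sqrt{s-t_0}$; (ii) the midpoint identity $t-s=s-t_0=(t-t_0)/2$ reduces the compositions $e^{\pm(t-s)B}e^{\mp(s-t_0)B}$ to quantities directly comparable to $I$ and $e^{\pm(t-t_0)B}$ via the group law \eqref{grouplaw}; (iii) the dilation-homogeneity $C(\l r)=\delta^{0}_{\sqrt{\l}}C(r)(\delta^{0}_{\sqrt{\l}})^{T}$, inherited from \eqref{B} and \eqref{gdil}, which allows the quadratic form at scale $t-s$ to be recast at scale $t-t_0$ with bounded distortion. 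Plugging the resulting inequality into the previous display and absorbing the additive constant $C_2$ into an overall multiplicative constant yields \eqref{lowerboundform} with a suitable $c_4>0$. The delicate point is to carry out (iii) precisely, keeping every constant uniform in the endpoints $(x,t),(y,t_0)$ and in the choice of $\xi^{*}\in D$.
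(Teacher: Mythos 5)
Your strategy correctly identifies the two ingredients — Lemma~\ref{boundDR} for a non-degeneracy estimate at scale $\sqrt{t-t_0}$, and the global Harnack inequality (Theorem~\ref{globalharnack}) for spatial propagation — and this is indeed the skeleton of the paper's proof. However, the Gaussian comparison $\Psi \le C_1\Phi + C_2$ that you flag as ``the main obstacle'' is a genuine gap: you enumerate the three ingredients (i)--(iii) that would be needed but do not carry out the group-theoretic and covariance-scaling computation that turns the exponent $\Psi$ (built from the intermediate point $\xi^*$ and the half-scale $t-s$) into the target exponent $\Phi$ (built only from the endpoints and the full scale $t-t_0$). As written, the proof is incomplete precisely at its hardest step.

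The paper avoids this obstacle by a different organization of the same ingredients. Instead of extracting, by the mean-value inequality, a single good point $\xi^*\in D$ and then chasing $\xi^*$ through a Harnack step, it first establishes the purely \emph{on-diagonal} bound $\Gamma(y,t;y,t_0)\ge c_7\,(t-t_0)^{-Q/2}$. This is done by integrating the global Harnack inequality over the whole set $D_R$ and invoking Lemma~\ref{boundDR}: the crucial observation, cited from \cite[Lemma~3.3]{LP}, is that the quadratic form arising in the Harnack estimate is \emph{uniformly bounded} by some constant $M$ for $\xi\in D_R$, so at this stage the Gaussian factor collapses to $e^{-M}$ and no comparison is needed. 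With the on-diagonal bound in hand, a single further application of the global Harnack (with time increment $\tau=\tfrac34(t-t_0)$ and, after reducing to $x=0$ by left- and dilation-invariance) produces directly the factor $e^{-c_0\langle C^{-1}(\tau)y,y\rangle}$; since $\tau$ is commensurate with $t-t_0$, the passage from $C^{-1}(\tau)$ to $C^{-1}(t-t_0)$ is a clean scaling property of the covariance matrix (cited from \cite[Remark~4.5]{LPP}). This is essentially your ingredient (iii), but in a far more tractable form because the intermediate point $\xi^*$ and the associated group-element arithmetic (your ingredient (ii)) have been eliminated entirely. If you wish to keep your single-point structure, you must actually carry out (i)--(iii) with explicit constants; otherwise reorganize along the paper's on-diagonal-first lines, which is both shorter and keeps the constants transparent.

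One further small point: you do not reduce to $x=0$. The paper does so at the outset, invoking the invariance of $\L$ under the group operation \eqref{grouplaw} and the dilations \eqref{gdil}; without this normalization the bookkeeping in your Gaussian comparison becomes heavier than necessary.
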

\begin{proof}
We restrict ourselves to the case where $x=0$, as the general statement can be obtained from the dilation and translation-invariance of the operator $\L$.
Then, for every $y \in \R^N$ and $R>0$, we set
\begin{equation*}
D_R:=\Big\lbrace \xi \in \R^N: \left|\delta_{\sqrt{\tau}}^0(y-e^{\tau B}\xi)\right|\leq R \Big\rbrace
\end{equation*}
and we compute
\begin{align*}
\meas(D_R)=\int_{D_R}d\xi &=R^Q \int_{ \left|\delta_{\sqrt{\tau}}^0(y-e^{\tau B}\xi)\right|\leq 1 }d\xi =R^Q\tau^{Q/2}\int_{ \left|(y-e^{\tau B}\xi)\right|\leq 1 }d\xi\\
&=R^Q\tau^{Q/2}\int_{ \left|\xi\right|\leq 1 }d\xi
=R^Q\tau^{Q/2}\meas(B_1(0))=c_5\tau^{Q/2},
\end{align*}
where the constant $c_5$ only depends on $B$ and $R$.
We also note that the function $\langle C^{-1}(t-t_0)\left( y-e^{(t-t_0)B}x\right),y-e^{(t-t_0)B}x\rangle$ is bounded by a constant $M$ in $D_R$ (see \cite[Lemma 3.3]{LP}). 
Lastly, we now set $\tau=\frac{t-t_0}{2}$ and apply to $\Gamma$ the global Harnack inequality stated in Theorem \ref{globalharnack}, which yields
	\begin{eqnarray}\label{globharnackGamma}
	\Gamma(y,t;y,t_0) \geq c_0 e^{-c_0 \langle C^{-1}(\tau)\left( \xi-e^{\tau B}x\right),\xi-e^{ \tau B}x\rangle}\Gamma(\xi,t+\tau;y,t_0),\quad y,\xi \in \R^{N}.
	\end{eqnarray}
	Hence, integrating inequality \eqref{globharnackGamma} over $D_R$, we infer 
\begin{align*}
\Gamma(y,t;y,t_0)&=\frac{c_6}{\tau^{Q/2}}\int_{D_R}\Gamma(y,t;y,t_0)d\xi \\&\geq 	\frac{c_6\,c_0}{\tau^{Q/2}}\int_{D_R}e^{-c_0 \langle C^{-1}(\tau)\left( \xi-e^{\tau B}x\right),\xi-e^{ \tau B}x\rangle}\Gamma(\xi,t+\tau;y,t_0)d\xi\\
&\geq \frac{c_6\,c_0}{\tau^{Q/2}}\int_{D_R} e^{-M}\Gamma(\xi,t+\tau;y,t_0)d\xi\\
&\geq \frac{c_7}{(t-t_0)^{Q/2}},
	\end{align*}
where the last inequality is a direct consequence of Lemma \ref{boundDR} and the constant $c_7$ only depends on $Q$, $\lambda$, $\Lambda$, $q$ and $B$.\\
Setting $\tau=\frac{3}{4}(t-t_0)$ and $x=0$, we apply once again Theorem \ref{globalharnack} and we get
\begin{align*}
\Gamma(0,t;y,t_0)&\geq c_0e^{-c_0\langle C^{-1}(\tau)y,y\rangle}\Gamma(y,t+\tau;y,t_0)\\
&\geq \frac{c_8}{(t-t_0)^{Q/2}}e^{-c_0\langle C^{-1}(\tau)y,y\rangle}
\geq  \frac{c_9}{(t-t_0)^{Q/2}}e^{-c_9\langle C^{-1}(t-t_0)y,y\rangle},
\end{align*}
where the last inequality is a consequence of a property of the covariance matrix $C$ (see \cite[Remark 4.5]{LPP}). This concludes the proof.
\end{proof}

\setcounter{equation}{0}\setcounter{theorem}{0}
\section{Proof of Theorem \ref{main1}}
\label{proof2}
Let us consider the operator $\L$ under the assumption \textbf{(H1)}-\textbf{(H3A)}.
Our first aim is to build a sequence of operators $\left( \L_\e \right)_{\e}$ satisfying the assumption \textbf{(C)}
of Theorem \ref{ex-prop}. Without loss of generality we restrict ourselves to the case of 
$(T_0, T_1) = (0,T)$, with $T > 1$.
Thus, we may consider $\r \in C^\infty_0 (\R)$ and $\psi \in C^\infty_0(\R^N)$ such that
\begin{align*}
	\int \limits_\R \rho(t) \, dt = 1& , \quad  \text{supp} \, \rho \subset B \left( \frac{T}{2},\frac{T}{4} \right), \quad  \text{and} \quad
	\int \limits_{\R^N} \psi (x) \, dx = 1 , \quad \text{supp} \, \psi \subset B(0,1),
\end{align*}
where by abuse of notation $B(\tfrac{T}{2},\tfrac{T}{4})$ denotes the Euclidean ball on $\R$ of radius 
$\tfrac{T}{4}$ and center $\tfrac{T}{2}$ of suitable dimension
and $B(0,1)$ denotes the Euclidean ball of $\R^N$ of radius $1$ and center $0$ of suitable dimension.
Then, for every $\e \in (0, 1]$ we classically construct two families of mollifiers 
\begin{align*}
	\r_\e(t) = \frac1\e \,  \r \left( \frac{t}{\e} \right), \qquad \psi_\e (x) = \frac{1}{\e^N} \, \psi \left( \frac{x}{\e} \right).
\end{align*}
Lastly, for every $\e \in (0, 1]$, for every $t \in \, (0,T)$ and $x \in \R^N$ we define
\begin{align*} %    \label{n-coeff}
    (a_{ij})_{\e}(x,t) \, &:= \int \limits_\R \int \limits_{\R^N} a_{ij}(x-y, (1-\e)t  + \tau) \psi_\e(y) \, \rho_\e (\tau) \, dy \, d\tau , \quad i,j=1, \ldots, N, \\ \nonumber
    (b_i)_{\e}(x,t) \, &:= \int \limits_\R \int \limits_{\R^N} b_i(x-y, (1-\e)t  + \tau) \psi_\e(y) \, \rho_\e (\tau) \, dy \, d\tau,  \, \quad \quad i=1, \ldots, N, \\ \nonumber
    c_\e (x,t) &:= \int \limits_\R \int \limits_{\R^N} c(x-y, (1-\e)t  + \tau) \psi_\e(y) \, \rho_\e (\tau) \, dy \, d\tau.
\end{align*}
These newly defined coefficients are smooth and bounded from above by the same constant appearing in assumption \textbf{(H3A)}.
Moreover, for every $\e \in (0,1]$ the coefficients $(a_{ij})_{\e}$ are smooth and satisfy the following version of 
\textbf{(C)}
\begin{align*}
	| (a_{ij})_{\e} | \le \sup_{(x,t) \in \R^{N+1}} |a_{ij}(x,t)| \le \, \Lambda,
\end{align*}
\begin{align*}
	%	\quad \text{and} \quad
\Big \lvert \tfrac{\p (a_{ij})_\e}{\p x_k} (x,t) \Big \rvert &\le \Big\lvert \int_{B(0,\e)}\Big\vert \tfrac{\p \psi_\e}{\p x_k}(y) \Big\vert \, \vert a_{ij}(x-y,(1-\e)t+\tau)dy \vert 
	\Big\rvert \\
	&\le \frac{1}{\e^N}\, \e^N\, M \sup_{x\in \R^N}\Big\vert \tfrac{\p \psi_\e}{\p x_k}(x) \Big\vert=M \sup_{x\in \R^N}\Big\vert \tfrac{\p \psi_\e}{\p x_k}(x) \Big\vert < +\infty
	%\sup \limits_{B(x,y,t)} \Big \lvert \tfrac{\p a }{\p x} \Big \rvert 
%			      \le \, \tfrac{2\Lambda}{x}.
\end{align*}
for every $i,j=1,\ldots,N$ and for every $k=1,\ldots, m_0$.
The same statement holds true for the coefficients $c_\e$ and $(b_{i})_\e$, with $i=1, \ldots, N$ and $\e \in (0,1]$. Thus, thanks to the mean value theorem along the direction of the vector fields $\tfrac{\p }{\p x_k}$, the coefficients $(a_{ij})_{\e}$, $c_\e$ and $(b_{i})_\e$, with $i=1, \ldots, N$ and $\e \in (0,1]$, are Lipschitz continuous and therefore Dini continuous. Hence, we can apply Theorem \ref{th-1} to $(\GL^{\!\!\! \e})_{\e}$ for every $\e \in (0,1]$. Thus, there exists a sequence of 
equibounded fundamental solutions $(\GL^{\!\!\! \e})_{\e}$, in the sense that each of them satisfies Theorem \ref{ex-prop},
i.e. for every $(x,t) , (\x, \tau) \in \R^{N+1}$, with $0 < t < \tau < T$
 \begin{align*}
		C^{-} \, \GK^{\! \! \! \! \l^{-}} (x,t; y,\t) \, \le \, \GL^{\!\!\! \e}(x,t; y,\t) \, \le \,
		C^{+} \, \GK^{\! \! \! \! \l^{+}} (x,t;y,\t).
\end{align*}
We point out that, since the coefficients of $\L_\e$ are uniformly bounded by $\Lambda$, the coefficients of Theorem \ref{main1} 
do not depend on $\e$. Moreover, given property $(5)$ of Theorem \ref{ex-prop}, for every $\e \in (0,1]$ we have 
$\GL^{\!\!\! \e} (x,t; \x, \t) = 0$ whenever $t \le \tau$.

\medskip

As a first step, we prove there exists a converging subsequence $(\GL^{\!\!\! \e})_{\e}$ in every compact subset of 
$\{ (x,t; \x, \t) \in \left( \R^{N} \times (T, + \infty) \right)^2 \mid (x,t) \ne (\x, \t) \}$. For this reason, we define a sequence of open subsets 
$(\O_p)_{p \in \N}$ of $\R^{2N+2}$
\begin{align*}
	\O_p := &\left\{ x, \x \in \R^{N} :  \, |x|^2 < p^2, |\x|^2 < p^2, |x-\x|^2  > \frac{1}{2p} \right\}
			\times \left\{ t,\t \in \Big ( \frac1p, T - \frac1p \Big )  : \, \, |t-\t|^2 > \frac{1}{2p} \right\}.
\end{align*}	
Note that $\O_p \subset \subset \O_{p+1}$ for every $p \in \N$. 
Moreover, $\cup_{p=1}^{+\infty} \O_p = \{ (x,t; \x, \t) \in \left( \R^{N} \times (0, T) \right)^2 : (x,t) \ne (\x, \t) \}$.
Since $\GL^{\! \! \! \! \l^{+}}$ is a bounded function in $\O_p$, 
we have that $( \GL^{\!\!\! \e})_{\e}$ is an equibounded sequence in every
$\O_p$. Then, as the sequence $( \GL^{\!\!\! \e})_{\e}$ is equibounded in $\O_2$,
it is equicontinuous in $\O_{1}$ thanks to Theorem \ref{th-1}. 
Moreover, by Theorem \ref{ex-prop} and Theorem \ref{th-1}, we also have that 
\begin{align*}
	\left(\tfrac{\p \GL^{\!\!\! \e}}{\p x} \right)_{\e}, \quad 
	\left(\tfrac{\p \GL^{\!\!\! \e}}{\p \x} \right)_{\e}, \quad
	\left(\tfrac{\p^{2} \GL^{\!\!\! \e}}{\p x^{2}} \right)_{\e}, \quad 
	\left(\tfrac{\p^{2} \GL^{\!\!\! \e}}{\p \x^{2}} \right)_{\e}, \quad 
	\left(Y \GL^{\!\!\! \e} \right)_{\e}, \quad 
	\text{and} \quad \left(Y^*_{(\x,\t)} \GL^{\!\!\! n} \right)_{\e} 
\end{align*}
are bounded sequences in $C^{0}(\O_{1})$. Here $Y$ is the Lie derivative defined in \eqref{lie-diff} and $Y^*_{(\x,\t)}$ is its adjoint, computed with respect to the variable $(\x,\t)$. Thus, there exists a subsequence $(\GL^{\!\!\! 1,\e_1})_{\e_1}$ that converges uniformly to some function $\G_{1}$ that satisfies \eqref{gauss-bound-K} in $\O_1$.
Moreover, $\G_{1} \in C^{2}(\O_{1})$ and, for every $(x_0,t_0) \in \R^N \times (0,T)$ such that $x_0^2  + t_0^2 <1$ the function $u(x,t) := \G_{1}(x,t; x_0,t_0)$ is a classical solution to $\L u = 0$ in the set $\big\{ (x,t) \in \R^N \times (0,T) \, \mid (x,t; x_0,t_0) \in \O_{1} \big\}$.
%, and the function $v(\x,\t) := \G_{1}(x_0,t_0;\x,\t)$ is a classical solution to $\L^* v = 0$ in the set $\big\{ (\x,\y,\t) \in \R^3 \mid (x_0,t_0; \x,\t) \in \O_{1} \big\}$. 

We next apply the same argument to the sequence $(\GL^{\!\!\! 1,\e_1})_{\e_2}$ on the set $\O_{2}$, and
obtain a subsequence $(\GL^{\!\!\! 2,\e_2})_{\e_2}$ that converges in $C^{2}(\O_{2})$ to some function $\G_{2}$, that belongs to $C^{2}(\O_{2})$ and satisfies the bounds \eqref{gauss-bound-K} in $\O_2$. Moreover, for every $(x_0,t_0) \in \R^N \times (0,T)$ such that $x_0^2 + t_0^2 <4$ the function $u(x,t) := \G_{2}(x,t; x_0,t_0)$ is a classical solution to $\L u = 0$ in the set 
$\big\{ (x,t) \in \R^N \times (0,T) \mid (x,t; x_0,t_0) \in \O_{2} \big\}$.
%, and the function $v(\x,\t) := \G_{2}(x_0,t_0;\x,\t)$ 
%is a classical solution to $\K^* v = 0$ in the set $\big\{ (\x,\y,\t) \in \R^3 \mid (x_0,y_0,t_0; \x,\y,\t) \in \O_{2} \big\}$. We remark that, since $\G_{2}$ is the limit of a subsequence of $(\GK^{\!\!\! 1,m})_{m \in \N}$, it must coincide with $\G_{1}$ in $\O_{1}$.

We next proceed by induction. Let us assume that the sequence $(\GL^{\!\! q-1,\e_{q-1}})_{\e_{q-1}}$ on the set 
$\O_{q}$ has been defined for some $q \in \N$. We extract from it a subsequence $(\GL^{\!\! q,\e_q})_{\e_q}$ converging in 
$C^{2}(\O_{q})$ to some function $\G_{q}$, satisfying \eqref{gauss-bound-K} in $\O_q$ and it agrees with $\G_{q-1}$ on the set $\O_{q-1}$.
%Moreover, $(x,y,t) \mapsto \G_{q}(x,y,t;x_0,y_0,t_0)$ is a classical solution to $\K u = 0$ and $(\x,\y,\t) \mapsto \G_{q}(x_0,y_0,t_0; \x,\y,\t)$ is a classical solution to $\K^* v = 0$. Moreover, 

Next, we define a function $\GL$ in the following way: for every $(x,t), (\x,\t)  \in \R^{N} \times (0,T)$ with 
$(x,t) \ne (\x,\t)$ we choose $q \in \N$ such that $(x,t; \x,\t) \in \O_{q}$ and we set $\GL(x,t;\x,\t) := \G_{q} (x,t;\x,\t)$. 
This argument provides us with a non ambiguous definition of $\GL$. Indeed, if $(x,t) \in \O_{p}$, then $\G_{p}(x,t;\x,\t) = \G_{q}(x,t;\x,\t)$.

We next check that $\GL$ has the properties listed in the statement of the Theorem \ref{main1}. As every $\GL^{\!\!\! \e}(x,t; x_{0}, t_{0})=0$ whenever $t \le t_{0}$, also $\GL(x,t; x_{0}, t_{0}) = 0$ whenever $t \le t_{0}$. 
For the same reason, it satisfies Theorem \ref{gauss-bound-K}. 
Moreover, for every $(x_0,t_0) \in \R^N \times (0,T)$, $(x,t) \mapsto \GL(x,t; x_0,t_0) \in L^{1}_{\loc}(\R^{N} \times (0,T)) 
\cap C^{2}_{\loc}(\R^{N} \times (0,T) \setminus \{ (x_{0},  t_{0})\})$, and is a weak solution to $\L u = 0$ in 
$\R^{N} \times (0,T) \setminus \{ (x_{0}, t_{0})\}$. 

As far as we are concerned with the reproduction property \emph{1.} of Theorem \ref{main1},
we use the upper bound in \eqref{gauss-bound-K}, which yields
\begin{equation*}
	\GL^{\!\!\!\e} (x,t;  \x,\t ) \GL^{\!\!\!\e} (\x,\t;  x_{0}, t_{0})  \le 
	C^{+} \, \GL^{\! \! \! \! \l^{+}} (x,t; \x,\t ) C^{+} \, \GL^{\! \! \! \! \l^{+}} (\x,\t;  x_{0}, t_{0}),
\end{equation*}
and the reproduction property 
\begin{equation*}
    \int_{\R^N} \GL^{\! \! \! \! \l^{+}} (x,t; \x,\t ) 
	\GL^{\! \! \! \! \l^{+}} (\x ,\t;  x_{0}, t_{0}) \, d\x \, d \t 
	= \GL^{\! \! \! \! \l^{+}} (x,t; x_{0}, t_{0}) \, < + \infty,
\end{equation*}
which allows us to use the Lebesgue convergence theorem. Thus the property holds true.

To conclude the proof of Theorem \ref{main1} we have to verify that for any bounded function $\phi \in C (\R^{N})$ and any $x,y \in \R^{N}$
 the function
	\begin{equation} \label{eq-repr-sol}
		u(x,t) \, = \, \int \limits_{\R^{n}} \GL(x,t; \x, t_{0}) \, \phi(\x) \, d\x 
	\end{equation}
is such that  
			\begin{align} \label{weak-cauchy}
				\begin{cases}
					\L u (x,t) = 0 \qquad \qquad &(x,t) \in \R^N \times [0,T], \\
					\lim \limits_{(x,t) \to (y,T) \atop t >T} u(x,t) = \phi(y) \qquad &y \in \R^N.
				\end{cases}
			\end{align} 
By the usual standard argument, we differentiate under the integral sign
\begin{equation*}
		\L u(x,y,t) \, = \, \int \limits_{\R^{N}} \L \GL(x,t; \x, t_{0}) \, \phi(\x) \, d\x \, = \, 0.
\end{equation*}

Thus, we are left with the proof of \eqref{uv-def}. Thanks to Theorem \ref{ex-prop} applied to
the regularized operator $\L_\e$, we have that for every $\e \in (0,1]$ and for every $y \in \R^N$ 
the following holds
 \begin{align*}
 	\lim \limits_{(x,t) \to (y, t_0) \atop t >t_0} u_\e(x,t) = \phi(y) ,
	\quad \text{where } u_\e(x,t) := \int_{\R^{N}} \GL^{\! \!\e}(x,t;y,t_0) \, \phi(y) \, dy.
 \end{align*}
Now, thanks to Theorem \ref{gauss-bound-K} we are able to apply the dominated convergence theorem, and thus		
for every $y \in \R^N$ we have 	
\begin{align*}
	\phi (y) = \lim \limits_{\e \to 0} \lim \limits_{(x,t) \to (y, t_0) \atop t >t_0} u_\e(x,t) =
		      \lim \limits_{(x,t) \to (y, t_0) \atop t >t_0} u(x,t) .
\end{align*}	
We conclude the proof by adapting the same argument when considering the function $v$.
$\hfill \square$

\end{document}